\pdfoutput=1

\documentclass[reqno,11pt,final]{amsart}
\usepackage{glaudo_pkg_en}

\title[The optimal map in the $2$-d matching problem]
{On the optimal map in the $2$-dimensional random matching problem}
\author{L. Ambrosio}
\address{Scuola Normale Superiore, Piazza dei Cavalieri 7, 56126 Pisa, Italy}
\email{luigi.ambrosio@sns.it}
\author{F. Glaudo}
\address{ETH, Rämistrasse 101, 8092 Zürich, Switzerland}
\email{federico.glaudo@math.ethz.ch}
\author{D. Trevisan}
\address{Dipartimento di matematica, Università di Pisa, Largo Bruno Pontecorvo 56127, Pisa}
\email{dario.trevisan@unipi.it}
\addbibresource{biblio.bib}

\begin{document}
\begin{abstract}
  We show that, on a $2$-dimensional compact manifold, the optimal transport map in the semi-discrete random 
  matching problem is well-approximated in the $L^2$-norm by identity plus the gradient of the solution 
  to the Poisson problem $-\lapl f^{n,t} = \mu^{n,t}-1$, where $\mu^{n,t}$ is an appropriate 
  regularization of the empirical measure associated to the random points.
  This shows that the ansatz of \cite{CaraccioloEtAl2014} is strong enough to capture the 
  behavior of the optimal map in addition to the value of the optimal matching cost.
  
  As part of our strategy, we prove a new stability result for the optimal transport map on a compact
  manifold.
\end{abstract}

\maketitle

\section{Introduction}\label{sec:intro}
Let $(X_1, X_2,\dots, X_n)$ be $n$ independent random points uniformly distributed on the square $\cc01^2$. 
The semi-discrete random matching problem concerns the study of the properties of the optimal 
coupling (with respect to a certain cost) of these $n$ points with the Lebesgue measure 
$\restricts{\Leb^2}{\cc01^2}$. 

More precisely, denoting $\mu^n \defeq \frac1n \sum_{i=1}^n\delta_{X_i}$ the empirical measure and 
$\m\defeq\restricts{\Leb^2}{\cc01^2}$, we want to investigate the optimal transport from 
$\m$ to $\mu^n$.

The ultimate goal is understanding both the distribution of the random variable associated to the optimal 
transport cost and the properties of the (random) optimal map. In the present paper we will show that
the optimal transport map can be well-approximated by the identity plus the gradient of the solution 
of a Poisson problem. In the large literature devoted to the matching problem,
we believe that (except for the 1-dimensional case) this is one of the few results describing
the behavior of the optimal map, and not only of the transport cost, see also 
\cite{Goldman2018} in connection with the behavior of the optimal transport map in the Lebesgue-to-Poisson problem 
on large scales. 

Before going on, let us briefly recall the definitions of optimal transport and Wasserstein distance. 
We suggest the monographs \cite{Villani08,Santambrogio15} for an introduction to the topic.
\begin{definition}[Wasserstein distance]
  Let $(X, d)$ be a compact metric space and let $\mu,\nu\in\prob(X)$ be probability measures.
  Given $p\in\co{1}{\infty}$, we define the $p$-Wasserstein distance between $\mu$ and $\nu$ as
  \begin{equation}\label{eq:defWp}
    W_p^p(\mu, \nu) \defeq \inf_{\gamma\in\Gamma(\mu, \nu)} \int_{X\times X} d^p(x, y) \de\gamma(x, y) \comma
  \end{equation}
  where $\Gamma(\mu, \nu)$ is the set of all $\gamma\in\prob(X\times X)$ such that the projections $\pi_i$, $i=1,2$, on the 
  two factors are $\mu$ and $\nu$, that is $(\pi_1)_\#\gamma = \mu$ and $(\pi_2)_\#\gamma=\nu$.
\end{definition}
\begin{remark}
  The infimum in the previous definition is always attained (\cite[Theorem 1.4]{Santambrogio15}).
  
  Moreover, if $(X, d)$ is a Riemannian manifold and $\mu\ll\m$, where $\m$ is the volume measure of 
  the manifold, the Wasserstein distance is realized by a map (\cite{McCann01}). 
  Namely, the infimum \cref{eq:defWp} is attained and the unique minimizer is induced by a Borel map $T:M\to M$, 
  so that $T_\#\mu = \nu$ and 
  \begin{equation*}
    W_p^p(\mu, \nu) = \int_M d^p(x, T(x)) \de\mu(x) \fullstop
  \end{equation*}
\end{remark}

Even though the square is a fundamental example, the random matching problem makes perfect sense even in 
more general spaces (changing the reference measure $\m$ accordingly).
Historically, in the combinatorial literature\footnote{In the combinatorial literature the problem considered
was the bipartite matching problem, in which two independent random point clouds have to be matched. The 
semi-discrete matching and the bipartite matching are tightly linked and, given that we will consider only
the former, we are going to talk about the combinatorial literature as if it were considering the semi-discrete
matching.}, the most common ambient space was $\cc01^d$ for some $d\ge 1$ and the aspect of the problem that 
attracted more attention was estimating the expected value of the $W_1$ cost. 
In the papers \cite{Ajtai1984,Talagrand1992,Dobric1995,Ledoux2017} (and possibly in other ones) 
the problem was solved in all dimensions and for all $1\le p < \infty$, obtaining the growth 
estimates\footnote{The notation $f(n)\approx g(n)$ means that there exists a positive constant $C>0$ such 
that $C^{-1}g(n) \le f(n)\le C g(n)$ for every $n$.}
\begin{equation*}
  \E{W_p^p(\m, \mu^n)} \approx 
  \begin{cases}
    n^{-\frac p2} &\text{ if $d=1$,}\\
    \left(\frac{\log(n)}n\right)^{\frac p2} &\text{ if $d=2$,}\\
    n^{-\frac pd} &\text{ if $d\ge 3$.}
  \end{cases} 
\end{equation*}
As might be clear from the presence of a logarithm, the matching problem exhibits some unexpected behavior
in dimension $2$. 

See the introductions of \cite{ambrosio-glaudo2018,Ledoux2017} or \cite[Chapter 4, 14, 15]{Talagrand14}
for a more in-depth description of the history of the problem.

Nowadays the topic is active again (\cite{holden2018,Talagrand2018,Goldman2018,Ledoux2017,ambrosio-glaudo2018,Ledoux18,Ledoux19}),
also as a consequence of \cite{Ambrosio-Stra-Trevisan2018}, in which the authors, 
following an ansatz suggested in \cite{CaraccioloEtAl2014}, manage to obtain the leading term of the 
asymptotic expansion of the expected matching cost in dimension $2$ with respect to the quadratic 
distance\footnote{The notation $f(n)\sim g(n)$ means that $\frac{f(n)}{g(n)}\to 1$ when $n\to\infty$.}:
\begin{equation}\label{eq:limit_value}
  \E{W_2^2(\m, \mu^n)} \sim \frac{\log(n)}{4\pi n} \fullstop
\end{equation}
The approach is far from being combinatorial, indeed it relies on a first-order approximation of the 
Wasserstein distance with the $H^{-1}$ negative Sobolev norm. Their proof works on any closed compact 
$2$-dimensional manifold.

Given that we will build upon it, let us give a brief sketch of the approach. 
What we are going to describe is simpler than the original approach of \cite{Ambrosio-Stra-Trevisan2018} and
can be found in full details in \cite{ambrosio-glaudo2018}. For simplicity we will assume to work on the 
square.

Let $T^n$ be the optimal map from $\m$ to $\mu^n$, whose existence is ensured by Brenier's Theorem 
(see \cite{Brenier91}). Still by Brenier's Theorem, we know that $T^n = \id + \nabla \tilde f^n$, where
$\id$ is the identity map and $\tilde f^n:\cc01^2\to M$ is a convex function. 
With high probability $\mu^n$ is well-spread on the square, thus we expect $\nabla \tilde f^n$ to be 
\emph{very small}.
We know $(T^n)_\#\m=\mu^n$ and we would like to apply the change of variable formula to deduce something 
on the Hessian of $\tilde f^n$. 
The issue is that the singularity of $\mu^n$ prevents a direct application of the change of variable 
formula. 
Anyhow, proceeding formally we obtain $\det(\id+\nabla^2 \tilde f^n)^{-1}=\mu^n$. 
Going on with the formal computation, if we consider only the first order term of 
the left hand side, the previous identity simplifies to
\begin{equation*}
  -\lapl \tilde f^n \approx \mu^n-1 \fullstop
\end{equation*}
Somewhat unexpectedly, this last equation makes perfect sense. 
Therefore we might claim that if we define $f^n:\cc01^2\to\R$ as the solution of $-\lapl f^n=\mu^n-1$
(with null Neumann boundary condition), then $T^n$ is well-approximated by $\id+\nabla f^n$ and 
furthermore the transport cost is well-approximated by $\int \abs{\nabla f^n}^2\de\m$.

This conjecture is appealing, but false, if taken literally. Indeed, it is very easy to check that the integral
$\int \abs{\nabla f^n}^2\de\m$ diverges. 

The ingredient that fixes this issue is a regularization argument. 
More precisely, let $\mu^{n,t}\defeq P_t^*\mu^n$ be the evolution at a certain small time $t>0$ of the 
empirical measure through the heat semigroup (see \cite[Chapter 6]{Chavel84}).
If we repeat the ansatz with $\mu^n$ replaced by $\mu^{n,t}$ we obtain a function $f^{n,t}:\cc01^2\to\R$ 
that solves 
\begin{equation*}
  -\lapl f^{n,t} = \mu^{n,t}-1
\end{equation*}
with null Neumann boundary conditions. Let us remark that in fact $f^{n,t}=P_t f^n$.

Once again, we can hope that $\id+\nabla f^{n,t}$ approximates very well $T^n$ and furthermore that the transport
cost from $\m$ to $\mu^n$ is well-approximated by $\int \abs{\nabla f^{n,t}}^2\de\m$.

This time the predictions are sound.
Choosing carefully the time $t=t(n)$, we can show that, with high probability, the map $\id+\nabla f^{n,t}$ is
optimal from $\m$ to $\left(\id+\nabla f^{n,t}\right)_\#\m$ and the Dirichlet energy of $f^{n,t}$ approximate
very well $W_2^2(\m, \mu^n)$.
Only one part of the conjecture is left unproven by \cite{ambrosio-glaudo2018}: is it true that 
$\id+\nabla f^{n,t}$ approximates, in some adequate sense, the optimal map $T^n$?
The goal of the present paper is to answer positively this question.

We are going to prove the following.
\begin{theorem}\label{thm:main_theorem}
  Let $(M,\metric)$ be a $2$-dimensional closed compact Riemannian manifold (or the square $\cc01^2$) whose
  volume measure $\m$ is a probability. We will denote with $d:M\times M\to\co0\infty$ the Riemannian 
  distance on $M$.
  
  Given $n\in\N$, let $X_1, X_2,\dots, X_n$ be $n$ independent random points $\m$-uniformly 
  distributed on $M$. 
  Let us denote $\mu^n\defeq \frac1n \sum_i \delta_{X_i}$ the empirical measure associated to the 
  random point cloud and let $T^n$ be the optimal transport map from $\m$ to $\mu^n$.
  
  For a fixed time $t>0$, let $\mu^{n,t}\defeq P_t^*\mu^n\in\prob(M)$ and let $f^{n,t}:M\to\R$ be the unique
  null-mean solution\footnote{If $M=\cc01^2$ we ask also that $f$ satisfies the null Neumann boundary
  conditions.} of the Poisson problem $-\lapl f^{n,t}=\mu^{n,t}-1$.
  
  If we set $t=t(n)=\frac{\log(n)^4}n$, on average $T^n$ is very close to $\exp(\nabla f^{n,t})$ in the 
  $L^2$-norm, that is
  \begin{equation}\label{eq:main-quantitative}
     \frac{\E{\int_M d^2(T^n, \exp(\nabla f^{n,t}))\de\m}}{\frac{\log(n)}{n}} \ll \sqrt{\frac{\log \left(\log (n)\right)}{ \log(n) }} \fullstop
  \end{equation}
  In particular,
  \begin{equation*}
      \lim_{n\to\infty}\frac{\E{\int_M d^2(T^n, \exp(\nabla f^{n,t}))\de\m}}
      {\E{\int_M d^2(T^n, \id)\de\m}} 
      = 0 \fullstop
  \end{equation*}
\end{theorem}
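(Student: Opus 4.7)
The plan is to combine the quantitative ansatz of \cite{ambrosio-glaudo2018} with a new deterministic stability result for optimal transport maps. The work in \cite{ambrosio-glaudo2018} provides, with probability tending to $1$ at least polynomially fast, two crucial facts about the smoothed object: the map $S^{n,t}\defeq \exp(\nabla f^{n,t})$ is itself the optimal transport from $\m$ to its pushforward $S^{n,t}_\#\m=\mu^{n,t}$, and it comes with explicit smallness and $C^{1,\alpha}$ regularity bounds on $\nabla f^{n,t}$ inherited from the heat-kernel smoothing. On the complementary, very small, probability event, the crude bound $\int d^2(T^n,S^{n,t})\de\m\le\operatorname{diam}(M)^2$ absorbs any contribution to the expectation.

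On the good event we are therefore comparing two optimal transport maps from the common source $\m$ to two different targets, namely $\mu^n$ (singular) and $\mu^{n,t}$ (smooth). The heart of the paper should be a deterministic stability estimate of the schematic form
\[
  \int_M d^2(T^n,S^{n,t})\,\de\m\;\le\;C\,\Phi(\mu^n,\mu^{n,t}),
\]
where $\Phi$ is a weak distance between measures --- plausibly a negative Sobolev norm of $H^{-1}$ type --- designed to exploit both the smoothness of $S^{n,t}$ and the $c$-concavity of the underlying Kantorovich potentials. I would prove such an estimate by a second-variation / displacement-convexity argument along the $W_2$-geodesic interpolation between $\mu^n$ and $\mu^{n,t}$, using that $f^{n,t}$ is uniformly semiconcave so that a Gr\"onwall-type bound converts the weak perturbation of the target into a pointwise $L^2$ perturbation of the map. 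Curvature of $M$ enters only through quantitative constants in the $c$-transforms and is harmless for a fixed smooth manifold.

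The remaining step is an expectation estimate for $\Phi(\mu^n,\mu^{n,t})=\Phi(\mu^n,P_t^*\mu^n)$. Expanding in Laplacian eigenfunctions $\{e_k\}_{k\ge 1}$ on $M$ one has, heuristically,
\[
  \|\mu^n-\mu^{n,t}\|_{H^{-1}}^2\;=\;\sum_{k\ge 1}\frac{(1-e^{-\lambda_k t})^2}{\lambda_k}\Bigl(\frac{1}{n}\sum_{i=1}^n e_k(X_i)\Bigr)^{\!2},
\]
so that, taking expectation, using $\E{e_k(X_i)^2}=1$ and Weyl's law $\lambda_k\asymp k$, one gets a contribution of order $\log(1/t)/n$ after the sum is split at frequencies $\lambda_k\sim 1/t$. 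Plugging in $t=\log(n)^4/n$ yields a bound for $\E{\Phi(\mu^n,\mu^{n,t})}$ smaller than $\log\log(n)/n$ up to constants, which combined with the factors coming from the stability estimate produces the rate claimed in \eqref{eq:main-quantitative}.

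The hard part is clearly the stability theorem: controlling the $L^2$ distance between two optimal maps in terms of a sufficiently weak distance between the targets, sharply enough not to destroy the logarithmic rate, and on a curved $2$-manifold where Brenier convexity has to be replaced by $c$-concavity. All other ingredients are either established in \cite{Ambrosio-Stra-Trevisan2018,ambrosio-glaudo2018} or follow from standard spectral heat-kernel analysis.
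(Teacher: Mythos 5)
Your high-level architecture matches the paper: restrict to a high-probability good event where the smoothed map is optimal and well-behaved, prove a deterministic stability estimate for the optimal map in terms of a distance between the two targets, and combine with the quantitative estimates from \cite{ambrosio-glaudo2018}. However there are three concrete points where your sketch goes off-track.

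First, you identify $S^{n,t}_\#\m$ with $\mu^{n,t}$. This is false: the pushforward is denoted $\hat\mu^{n,t}$ in the paper and differs from $\mu^{n,t}$ by a first-order linearization error. Bounding $W_2(\mu^{n,t},\hat\mu^{n,t})$ is a separate item (\cref{eq:perturbation_error}, of order $1/(n\log n)$); you cannot absorb it by assumption. The measures plugged into the stability theorem are $\nu=\m$, $\mu_1=\hat\mu^{n,t}$, $\mu_2=\mu^n$, and the comparison has to go through $\hat\mu^{n,t}$.

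Second, your choice of weak distance is $H^{-1}$, but $\|\mu^n-\mu^{n,t}\|_{H^{-1}}$ is \emph{infinite} in dimension $2$: Dirac masses are in $H^{-s}$ only for $s>1$, and the Fourier sum $\sum_k(1-e^{-\lambda_k t})^2/\lambda_k$ diverges for each fixed $n$ (the tail $\sum_{\lambda_k>1/t}\lambda_k^{-1}$ is log-divergent, with no intrinsic UV cutoff). Your $\log(1/t)/n$ answer implicitly truncates at $\lambda_k\sim n$ without justification. The paper avoids this by formulating the stability theorem (\cref{thm:optimal_map_stability}) in terms of $W_2$, which is finite between probability measures; the needed input is then the \emph{refined} heat-flow contractivity estimate $\E{W_2^2(\mu^n,\mu^{n,t})}\lesssim\log\log(n)/n$ from \cite[Theorem 5.2]{ambrosio-glaudo2018}, flagged explicitly in the footnote to \cref{prop:ag18} as a non-standard improvement over the trivial bound $W_2^2\lesssim t$ (which at $t=\log^4(n)/n$ is far too large).

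Third, and this affects whether you recover the rate at all: the paper's stability bound is
\begin{equation*}
  \int_M d^2(S,T)\,\de\nu\;\lesssim\;W_2^2(\mu_1,\mu_2)\;+\;W_2(\mu_1,\mu_2)\,W_2(\nu,\mu_1)\comma
\end{equation*}
and it is the \emph{cross} term, $W_2(\mu^n,\hat\mu^{n,t})\,W_2(\m,\hat\mu^{n,t})$, that dominates after taking expectations, producing exactly the claimed $\sqrt{\log\log(n)/\log(n)}$ after dividing by $\log(n)/n$. A stability estimate of the one-term form $\int d^2(S,T)\le C\Phi(\mu_1,\mu_2)$, as in your sketch, would not naturally yield this cross term and hence not the stated rate. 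Finally, the paper's proof of the stability theorem proceeds via the strict $c$-concavity inequality (\cref{it:hopflax_convexity} of \cref{thm:hopflax_properties}), established by explicit short-time analysis of the Hopf--Lax semigroup with $C^{1,1}$ data, rather than by displacement convexity along Wasserstein geodesics with a Gr\"onwall step; your route may be salvageable but is not what is done here, and your sketch of it is too schematic to assess.
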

\begin{remark}
  To handle the case of the square $M=\cc01^2$ some care is required. Indeed the presence of boundary makes
  things more delicate.  This is the reason why only the square is considered in the theorem and not any
  $2$-dimensional compact manifold with boundary. 
  
  See \cite[Subsection 2.1 and Remark 3.10]{ambrosio-glaudo2018} for some further details on this matter.
\end{remark}

\begin{remark}\label{rem:distance-tangent}
By McCann’s Theorem \cite{McCann01} we can write $T^n = \exp(\nabla f^n)$, hence a natural 
question is if \cref{eq:main-quantitative} holds with $|\nabla (f^n - f^{n,t})|$ in place of 
$d(T^n, \exp(\nabla f^{n,t}))$. Using the fact that the exponential map restricted to a 
sufficiently small neighbourhood of the null vector field is a global diffeomorphism with its 
image, it would be sufficient to show that, for every 
$\varepsilon>0$, $\P{\|d(T^{n},\id)\|_\infty > \varepsilon} \ll \log(n)/n$, as $n \to \infty$. 
We will prove this estimate in \cref{prop:linf_is_small}, that provides the desired approximation at
the level of the gradients
\begin{equation}\label{eq:main_gradients}
  \lim_{n\to\infty} 
  \frac{\E{\norm{\nabla f^n-\nabla f^{n,t}}_{L^2(M)}^2}}
  {\E{\norm{\nabla f^n}_{L^2(M)}^2}} = 0\fullstop 
\end{equation}
\end{remark}

The strategy of the proof is to show that the information that we already have on $\exp(\nabla f^{n,t})$ 
(namely that it is an optimal map between $\m$ and some measure $\hat\mu^{n,t}$ that is very close to 
$\mu^{n,t}$) is enough to deduce that it must be near to the optimal map $T^n$.  

As part of the strategy of proof, we obtain, in \cref{sec:stability}, a new stability result for the optimal transport 
map on a general compact Riemannian manifold (not only of dimension $2$). 
This is the natural generalization to Riemannian manifolds of \cite{gigli2011}.
The said stability result follows rather easily from the study of the short-time behavior of the Hopf-Lax 
semigroup we perform in \cref{sec:hopflax}. 
The Hopf-Lax semigroup comes up in our investigation as, when $t=1$, it becomes the operator of
$c$-conjugation and thus produce the second Kantorovich potential once the first is known (see 
\cite[Section 1.2]{Santambrogio15} for the theory of Kantorovich potentials and $c$-conjugation).

The main theorem is established in \cref{sec:random_matching}.
\vspace{2mm}

\noindent {\textit{Acknowledgments. } F. Glaudo has received funding from the European Research Council under the 
Grant Agreement No 721675. L. Ambrosio acknowledges the support of the MIUR PRIN 2015 project.

\subsection{Notation for constants}
We will use the letters $c$ and $C$ to denote constants, whose dependencies are denoted by $c=c(A, B,\dots)$. 
The value of such constants can change from one time to the other.

Moreover we will frequently use the notation $A\lesssim B$ to hide a constant that depends only on the
ambient manifold $M$. This expression means that there exists a constant $C=C(M)$ such that $A\le C\cdot B$.

\section{Short-time behavior of the Hopf-Lax semigroup with datum in \texorpdfstring{$C^{1,1}$}{C(1,1)}}
\label{sec:hopflax}
Let us begin recalling the definition of the Hopf-Lax semigroup (also called Hamilton-Jacobi semigroup).
\begin{definition}[Hopf-Lax semigroup]
  Let $(X, d)$ be a compact length space\footnote{A metric space is a length space if the distance between
  any two points is the infimum of the length of the curves between the two points. Let us remark that for
  the definition we need neither the compactness nor the length property of $X$, but without these 
  assumptions many of the properties of the Hopf-Lax semigroup fail (first of all the fact that it is 
  a semigroup).}.
  For any function $f\in C(X)$ and any $t\geq 0$, let $Q_t f:X\to\R$ be defined by
  \begin{equation*}
    Q_t f(y) = \min_{x\in X} \frac1{2t} d^2(x,y) + f(x) \quad (t>0),\qquad Q_0f=f\fullstop
  \end{equation*}
\end{definition}
Without additional assumptions on $X$ or $f$ it is already possible to deduce many properties of the Hopf-Lax
semigroup. Let us give a very short summary of the most important ones.
\begin{itemize}
  \item When $t\to 0$ the functions $Q_t f$ converge uniformly to $f$.
  \item The Hopf-Lax semigroup is indeed a semigroup, that is $Q_{s+t}f = Q_sQ_t f$ for any $s,\, t \geq 0$.
  \item In a \emph{suitable weak sense},the Hamilton-Jacobi equation
  \begin{equation*}
    \frac{\de}{\de t} Q_t f + \frac12\abs{\nabla Q_t f}^2 = 0 
     \end{equation*}
      holds.
  Let us emphasize that the mentioned equation does not make sense if we don't give an appropriate definition
  of norm of the gradient as we are working in a metric setting.
\end{itemize}
See \cite{Lott-Villani07}, in particular Theorem~2.5, for a detailed proof of the mentioned
properties. 

There is a vast literature investigating the regularity properties of the Hopf-Lax semigroup and its 
connection with the Hamilton-Jacobi equation, in particular that it is the unique solution in the viscosity
sense (see for instance \cite{lions1982,benton1977,bardi2008}). 
Nonetheless we could not find a complete reference for the short-time behavior of the Hopf-Lax semigroup on 
a Riemannian manifold (as the majority of the results are stated on the Euclidean space) with a relatively 
regular initial datum (namely $C^{1,1}$). This is exactly the topic of this section. 

What we are going to show, apart from \cref{it:hopflax_convexity}, is not new. For instance,
in \cite[Section 5]{fathi2003}, the author proves the validity of the method of characteristics in 
a way very similar to ours. In that paper more general Lagrangians are considered and as a consequence 
the proofs are more involved and require much more geometric tools and notation.

For us, the ambient space is a compact Riemannian manifold $(M, \metric)$ and the function 
$f\in C^{1,1}(M)$ is differentiable with Lipschitz continuous gradient. Moreover, either $M$ is closed
or it is the square $\cc01^2$. For a general
manifold with boundary the results are false, the square is special because its boundary is piecewise geodesic.
Handling all manifolds with totally geodesic boundary would be possible, but would require some additional 
care. In order to simplify the exposition we decided to state the results only for the square.
Throughout this section we will often use implicitly that a Lipschitz continuous function is differentiable
almost everywhere (see \cite[Theorem 3.2]{Evans-Gariepy}).

We will show that, up to a small time that depends on the $C^{1,1}$-norm of $f$, the Hopf-Lax semigroup 
is \emph{as good as one might hope}.
We will describe explicitly the minimizer $x=x_t(y)$ of the variational problem that 
defines $Q_t f(y)$ deducing some \emph{explicit} formulas for $Q_tf$ and its gradient and we will
show that $Q_t f$ solves the Hamilton-Jacobi equation in the classical sense.
Finally we will be able to control the $C^{1,1}$-norm of $Q_t f$ and the $C^{0,1}$-norm of $Q_t f-f$. 

How can we achieve these results for short times when $f\in C^{1,1}$? 
The main ingredient is the possibility to identify the minimizer $x=x_t(y)$ in the definition of $Q_t f(y)$.
Given $x\in M$, let $\gamma:\co{0}{\infty}\to M$ be the unique geodesic with $\gamma(0) = 0$ and 
$\gamma'(0)=\nabla f(x)$. If $y=\gamma(t)$, then the minimizer in the definition of $Q_t f(y)$ is exactly $x$.
This approach is exactly the method of characteristics when applied on a Riemannian manifold (\emph{straight 
lines on a manifold are geodesics}).

Let us begin with a technical lemma. 
\begin{lemma}\label{lem:exp_is_diffeo}
  Let $(M, \metric)$ be a closed compact Riemannian manifold (or the square $\cc01^2$).

  There exists a constant $c=c(M)$ such that the following statement holds.
  Let $X\in\chi(M)$ be a Lipschitz continuous vector 
  field\footnote{If $M=\cc01^2$ we ask also that $X$ is tangent to the boundary.}
  with $\norm{X}_\infty\le c$ and $\norm{\nabla X}_\infty\le c$ and, for any $0\le t\le 1$, let 
  $\varphi_t:M\to M$ be the map defined as $\varphi_t(x) \defeq \exp(tX(x))$, where $\exp:TM\to M$ denotes the 
  exponential map.
  For any $0\le t\le 1$, the map $\varphi_t$ is a homeomorphism 
  such that $\Lip(\varphi_t)$, $\Lip(\varphi_t^{-1}) \le 2$ and the vector field $X_t\in\chi(M)$ defined
  as
  \begin{equation*}
    X_t \defeq \frac{\partial \varphi_s}{\partial s}\Big|_{s=t}
  \end{equation*}
  is Lipschitz continuous with $\norm{\nabla X_t}_{\infty}\lesssim\norm{\nabla X}_{\infty}$.
\end{lemma}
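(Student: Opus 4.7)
The plan is to treat $\varphi_t$ as a $C^1$-small perturbation of the identity, leveraging smoothness of the exponential map on a neighborhood of the zero section in $TM$. First, I fix $c=c(M)$ smaller than half the injectivity radius of $M$ (for the square, taken as the distance between opposite sides), so that $tX(x)$ always lies in the domain on which $\exp_x$ is a diffeomorphism onto its image. A chain-rule computation yields
\begin{equation*}
  d\varphi_t|_x(v) = d\exp|_{tX(x)}\bigl(v,\, t\, dX|_x(v)\bigr),
\end{equation*}
viewing $TM$ as a vector bundle split into horizontal and vertical parts. Standard Jacobi-field estimates (quantified via the Riemann tensor, uniformly bounded on the compact $M$) give
\begin{equation*}
  \bigl\| d\varphi_t|_x - \mathrm{Id}_{T_xM} \bigr\|_{\mathrm{op}} \le C(M)\bigl(\|X\|_\infty + \|\nabla X\|_\infty\bigr),
\end{equation*}
and shrinking $c$ further makes this right-hand side at most $1/2$. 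Consequently $\Lip(\varphi_t)\le 3/2$ and $d\varphi_t$ is everywhere invertible with operator norm of the inverse at most $2$.

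Next I would upgrade the local diffeomorphism $\varphi_t$ to a global homeomorphism. The family $\{\varphi_s\}_{s\in[0,t]}$ continuously deforms the identity $\varphi_0=\mathrm{Id}$ through local diffeomorphisms of the compact manifold $M$, hence each $\varphi_t$ is a covering map, and the degree equality $\deg(\varphi_t)=\deg(\mathrm{Id})=1$ forces bijectivity. For $M=[0,1]^2$ the assumption that $X$ is tangent to $\partial M$ ensures $\varphi_t$ preserves the boundary (and pointwise fixes the four corners), and the same argument applies. The bound $\Lip(\varphi_t^{-1})\le 2$ follows from the pointwise lower bound $|d\varphi_t|\ge 1/2$ via the length-distortion inequality along minimizing geodesics.

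Finally, for the vector field $X_t$: for each fixed $x$ the curve $s\mapsto\varphi_s(x)=\exp_x(sX(x))$ is a geodesic with initial velocity $X(x)$, so
\begin{equation*}
  X_t\bigl(\varphi_t(x)\bigr) = \frac{\partial\varphi_s}{\partial s}\bigg|_{s=t}(x) = P_{t,x}\bigl(X(x)\bigr),
\end{equation*}
where $P_{t,x}$ denotes parallel transport from $x$ to $\varphi_t(x)$ along that geodesic. Since parallel transport is an isometry this gives $\|X_t\|_\infty=\|X\|_\infty$. The map $x\mapsto P_{t,x}(X(x))$ is Lipschitz with constant $\lesssim \|\nabla X\|_\infty$, by smooth dependence of the transport operator on its endpoints and on $X(x)$, and precomposing with $\varphi_t^{-1}$ at most doubles this constant, yielding $\|\nabla X_t\|_\infty \lesssim \|\nabla X\|_\infty$.

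The main obstacle I anticipate is the treatment of the square: making the covering-map argument and the Jacobi estimates rigorous in the presence of corners, and verifying that the boundary-tangency condition on $X$ is preserved under the flow $\varphi_t$. Away from corners $\partial M$ consists of two totally geodesic segments, so the exponential-map machinery applies verbatim after a reflection argument; the corners are pointwise fixed by $\varphi_t$ and can be isolated by a separate local check.
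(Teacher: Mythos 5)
Your proposal is essentially correct and follows a genuinely more geometric, chart-free route than the paper. For the bi-Lipschitz statement, the paper works in a fixed finite atlas and applies the elementary perturbation-of-identity lemma (small Lipschitz perturbations of $\mathrm{id}$ in $\mathbb{R}^d$ are bi-Lipschitz), then promotes local to global injectivity by observing that $\varphi(x_1)=\varphi(x_2)$ forces $d(x_1,x_2)\le 2\|X\|_\infty$. You instead estimate $\|d\varphi_t-\mathrm{Id}\|_{\mathrm{op}}$ by Jacobi-field bounds and invoke degree theory for surjectivity. Both are fine; the paper's short-displacement argument is more elementary and avoids the slight awkwardness of running a degree argument on a manifold with boundary and corners (which you correctly flag as needing a boundary-fixing check). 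For the Lipschitz estimate on $X_t$, your observation that $X_t(\varphi_t(x))=P_{t,x}(X(x))$ (parallel transport of the initial velocity along the geodesic) is exactly the intrinsic reformulation of what the paper does analytically: the paper writes $X_t(\varphi_t(x))=\gamma_x'(t)$ and invokes Lipschitz dependence of the geodesic ODE on initial data, then combines with $\mathrm{Lip}(\varphi_t^{-1})\le 2$, precisely your "precompose with $\varphi_t^{-1}$" step. The two are the same estimate in different clothing.

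One point to tighten: the claim that $x\mapsto P_{t,x}(X(x))$ has Lipschitz constant $\lesssim\|\nabla X\|_\infty$ purely "by smooth dependence of the transport operator" skips a real contribution. Writing $P_{t,x}(X(x))-P_{t,y}(X(y))=(P_{t,x}-P_{t,y})(X(x))+P_{t,y}(X(x)-X(y))$, the second term is controlled by $\|\nabla X\|_\infty\,|x-y|$ since parallel transport is an isometry, but the first term costs $\|P_{t,x}-P_{t,y}\|_{\mathrm{op}}\,\|X\|_\infty$, and the variation of the transport operator in the base point is of order $t\|X\|_\infty(1+\|\nabla X\|_\infty)|x-y|$ (Christoffel symbols along a short geodesic). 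Thus what you actually get is $\|\nabla X_t\|_\infty\lesssim \|\nabla X\|_\infty + t\|X\|_\infty$. Under the hypotheses $\|X\|_\infty,\|\nabla X\|_\infty\le c$ this is $\lesssim 1$, which is what the paper's chart argument also delivers and is all that is ever used downstream (only statement~(3) of \cref{thm:hopflax_properties} is exploited later), but it is not literally $\lesssim\|\nabla X\|_\infty$ without an extra argument; be aware that the paper's own "simple homogeneity" reduction is similarly terse on this point.
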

\begin{proof}
  We will give only a sketch of the proof of the first part of the statement as the argument is well-known.

  Let us begin by proving the result when $M$ is closed (in particular we exclude only $M=\cc01^2$).
  
  We can deduce the first part of the statement from the fact that $\varphi=\varphi_1$ is injective and 
  locally (i.e. on sufficiently small balls) it is a bi-Lipschitz transformation with its image.
  
  Working in a suitably chosen finite atlas (whose existence follows from the compactness of $M$), the fact 
  that $\varphi$ is a bi-Lipschitz diffeomorphism is a consequence of the following very 
  well-known lemma about perturbations of the identity (see \cite[Theorem 9.24]{rudin1976} or 
  \cite[Theorem 5.3]{fathi2003}).
  If $T:\Omega\subseteq \R^d\to\R^d$ is such that $T-\id$ is $L$-Lipschitz with $L<1$, then $T$ is locally
  invertible and $\Lip(T) \le 1+L,\ \Lip(T^{-1}) \le (1-L)^{-1}$.
  
  The global injectivity follows directly from the fact that it is locally bi-Lipschitz. Indeed if 
  $\varphi(x_1)=\varphi(x_2)$ then $d(x_1, x_2) \le 2\norm{X}_\infty$ and therefore we can exploit the local
  injectivity of $\varphi$.
  
  When $M=\cc01^2$ we need only a simple additional remark. Given that $X$ is tangent to the boundary, the map 
  $\varphi$ is a homeomorphism of the boundary. As a consequence of this fact, it is not
  difficult to prove (by injectivity) that the image of the interior of the square is mapped 
  by $\varphi$ in itself. 
  From here on we can simply mimic the proof described above for closed manifolds and achieve the
  result also for the case of the square.
  
  We move our attention to the second part of the statement.
  By a simple homogeneity argument, it is sufficient to prove that
  $\norm{\nabla X_t}_{\infty}\lesssim 1$.
  
  Once again we work in chart. Let $\Omega\subseteq\R^d$ be the domain of the chart. 
  As usual, $X_t$ can be understood as a vector field on $\Omega$ and $\varphi_t$ as a map from 
  $\Omega'\Subset\Omega$ into $\Omega$.
  Choosing the chart appropriately, we can assume that the Euclidean distance is bi-Lipschitz equivalent
  to the distance induced by the metric $\metric$.
  
  The Lipschitz continuity of $X_t$ with respect to the metric $\metric$ is equivalent to proving that, for 
  any $x,y\in\Omega$, it holds
  \begin{equation*}
    \abs{X_t(x)-X_t(y)} \lesssim \abs{x-y} \comma
  \end{equation*}
  where all the absolute values are with respect to the standard Euclidean norm. 
  Since $\varphi_t$ is surjective, it is sufficient to prove that, for any $x,y\in\Omega'$, it holds
  \begin{equation}\label{eq:exp_diffeotmp1}
    \abs{X_t(\varphi_t(x))-X_t(\varphi_t(y))} \lesssim \abs{\varphi_t(x)-\varphi_t(y)} \fullstop
  \end{equation}
  
  Given that $\varphi_t^{-1}$ is Lipschitz, we already know
  \begin{equation}\label{eq:exp_diffeotmp2}
    \abs{x-y} \lesssim \abs{\varphi_t(x)-\varphi_t(y)} 
    \quad\text{and}\quad
    \abs{X(x)-X(y)} \lesssim \abs{\varphi_t(x)-\varphi_t(y)} \fullstop
  \end{equation}
  Let $\gamma_x:\cc01\to\Omega$ be the unique geodesic, with respect to $\metric$, such that $\gamma_x(0)=x$
  and $\gamma_x'(0)=X(x)$. Let $\gamma_y:\cc01\to\Omega$ be defined analogously. By definition, it holds
  \begin{equation}\label{eq:exp_diffeotmp3}
    X_t(\varphi_t(x)) = \gamma_x'(t)
    \quad\text{and}\quad
    X_t(\varphi_t(y)) = \gamma_y'(t) \fullstop
  \end{equation}
  Taking into account \cref{eq:exp_diffeotmp1}, \cref{eq:exp_diffeotmp2} and \cref{eq:exp_diffeotmp3}, the 
  Lipschitz continuity of $X_t$ would follow from the inequality
  \begin{equation}\label{eq:exp_diffeotmp4}
    \abs{\gamma_x'(t)-\gamma_y'(t)}
    \lesssim \abs{\gamma_x(0)-\gamma_y(0)} + \abs{\gamma_x'(0)-\gamma_y'(0)}
    \fullstop
  \end{equation}
  The curves $\gamma_x,\gamma_y$ are geodesics, hence the vectors $(\gamma_x, \gamma_x')$ and 
  $(\gamma_y,\gamma_y')$ solve the same autonomous ordinary differential equation with different initial data.
  Hence \cref{eq:exp_diffeotmp4} follows from the well-known Lipschitz dependence of the solution 
  from the initial data (see \cite[Theorem 2.6]{teschl2012}) and therefore the proof is concluded.
\end{proof}

We can now state and prove the main theorem of this section. 
The technically demanding part of these notes is entirely enclosed in the following theorem.
\begin{theorem}\label{thm:hopflax_properties}
  Let $(M, \metric)$ be a closed compact Riemannian manifold (or the square $\cc01^2$).
  
  Let $f\in C^{1,1}(M)$ be a scalar function\footnote{If $M=\cc01^2$ we ask also that $f$ satisfies the 
  null Neumann boundary conditions.} and, for any positive time $t>0$, let us define the map 
  $\varphi_t:M\to M$ as $\varphi_t(x) \defeq \exp(t\nabla f(x))$.
  
  There exists a constant $c=c(M)$ such that the following properties hold for any time 
  $0\le t\le c \left(\norm{\nabla f}_\infty + \norm{\nabla^2 f}_\infty\right)^{-1}$:
  \begin{enumerate}[ref={(\arabic*)}]
   \item \label{it:varphi_t_diffeo}
   The map $\varphi_t$ is a bi-Lipschitz homeomorphism such that 
   $\Lip(\varphi_t),\, \Lip(\varphi_t^{-1}) \leq 2$.
   \item \label{it:hopflax_explicit} For any $y\in M$, it holds
   \begin{equation*}
      Q_t f(y) = \frac1{2t} d^2(\varphi_t^{-1}(y), y) + f(\varphi_t^{-1}(y)) \fullstop
   \end{equation*}
   \item \label{it:hopflax_convexity}
   For any $y,\,y'\in M$, one has the (strict-convexity-like) estimate
   \begin{equation*}
      \frac{d^2(y, y')}t \lesssim Q_tf(y)-Q_tf(y')
      +\frac1{2t}\left[d^2(\varphi_t^{-1}(y), y') - d^2(\varphi_t^{-1}(y), y)\right]\fullstop
   \end{equation*}
   \item \label{it:hopflax_regularity}
   The function $Q_tf$ is Lipschitz continuous in time and $C^{1,1}(M)$ in space. In particular we have
   $\norm{\partial_t Q_t f}_{\infty}\le \norm{\nabla f}_{\infty}$ and
   $\norm{\nabla^2Q_tf}_{\infty} \lesssim \norm{\nabla^2 f}_{\infty}$.
   \item \label{it:hamilton_jacobi} 
   The function $Q_tf$ is a classical solution of the Hamilton-Jacobi equation
   \begin{equation*}
      \frac{\de}{\de t} Q_t f + \frac12 \abs{\nabla Q_t f}^2 = 0 \fullstop
   \end{equation*}
   \item \label{it:gradient_conservation} 
   For any $x\in M$, if $\gamma:\cc01\to M$ is the geodesic such that $\gamma(0)=x$ and 
   $\gamma'(0) = \nabla f(x)$, then it holds
   \begin{equation*}
      Q_tf(\gamma(t)) = f(x) + \frac t2 \abs{\nabla f}^2(x) \quad\text{ and }\quad
      \nabla Q_t f(\gamma(t)) = \gamma'(t) \fullstop
   \end{equation*}
   \item \label{it:hopflax_lip}
   One has
   \begin{equation*}
    \Lip(Q_tf-f) \le t\norm{\nabla f}_\infty\cdot\norm{\nabla^2 f}_\infty \fullstop
  \end{equation*}
  \end{enumerate}
\end{theorem}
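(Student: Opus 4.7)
My plan is to dispatch item \ref{it:varphi_t_diffeo} as an immediate application of \cref{lem:exp_is_diffeo} to the Lipschitz vector field $X \defeq t\nabla f$, whose sup-norm and Lipschitz constant are both bounded by $t(\norm{\nabla f}_\infty + \norm{\nabla^2 f}_\infty)$ and hence are small under the hypothesis on $t$. I then treat \ref{it:hopflax_explicit} as the technical heart of the theorem; all remaining items will follow as short consequences.

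To establish \ref{it:hopflax_explicit}, fix $y\in M$ and set $g_y(z)\defeq \tfrac{1}{2t}d^2(z,y)+f(z)$. I will show in three steps that $x^*\defeq \varphi_t^{-1}(y)$ is the unique global minimizer of $g_y$. First, every global minimizer $z^*$ lies in the ball $B(y,2t\norm{\nabla f}_\infty)$: comparing $g_y(z^*)\leq g_y(y)=f(y)$ with the Lipschitz bound on $f$ yields $\tfrac{1}{2t}d^2(z^*,y)\leq \norm{\nabla f}_\infty\, d(z^*,y)$. Second, on this small ball (which lies within the injectivity radius once $c(M)$ is small enough), the classical normal-coordinate expansion of the Hessian of $\tfrac{1}{2}d^2(\cdot,y)$ at $y$ is $\mathrm{Id}+O(d(\cdot,y)^2)$, so $\nabla^2 g_y \gtrsim \tfrac{1}{t}\mathrm{Id}$ for $t$ small; this makes $g_y$ uniformly strictly convex along every geodesic inside the ball. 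Third, the critical-point equation $\nabla g_y(z)=0$ reads $\exp_z^{-1}(y)=t\nabla f(z)$, i.e.\ $y=\varphi_t(z)$, so $x^*$ is the unique critical point, hence the unique minimizer. Item \ref{it:hopflax_convexity} then follows from specializing the strict-convexity inequality $g_y(z)-g_y(x^*)\gtrsim \tfrac{1}{t}d^2(z,x^*)$ to $z=x^*(y')$ and rewriting $Q_tf(y')\leq g_y(x^*(y))$ from \ref{it:hopflax_explicit}; the term $d^2(y,y')/t$ in the statement appears after replacing $d(x^*(y),x^*(y'))$ by $d(y,y')$ via $\Lip(\varphi_t^{-1})\leq 2$.

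Once \ref{it:hopflax_explicit} is available the other items follow with essentially no new ideas. For \ref{it:gradient_conservation}, the identity $d(x,\gamma(t))=t\abs{\nabla f(x)}$ gives the stated value of $Q_tf(\gamma(t))$, while the envelope theorem applied to \ref{it:hopflax_explicit}, combined with the classical $\nabla_y \tfrac{1}{2}d^2(x,y)=-\exp_y^{-1}(x)$, yields $\nabla Q_tf(\gamma(t))=\gamma'(t)$. For \ref{it:hopflax_regularity} I write $\nabla Q_tf = X_t\circ \varphi_t^{-1}$, with $X_t$ the vector field produced by the second part of \cref{lem:exp_is_diffeo} applied to $\nabla f$; both factors are Lipschitz with the required quantitative bounds, giving the $C^{1,1}$ estimate, while $\abs{\partial_tQ_tf}\leq \norm{\nabla f}_\infty$ follows from $\abs{\gamma'(t)}=\abs{\nabla f(x)}$ together with the Hamilton--Jacobi relation below. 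For \ref{it:hamilton_jacobi}, differentiating $s\mapsto Q_sf(\gamma(s))=f(x)+\tfrac{s}{2}\abs{\nabla f(x)}^2$ in $s$ at $s=t$ and comparing with $\partial_tQ_tf+\langle \nabla Q_tf,\gamma'(t)\rangle = \partial_tQ_tf+\abs{\nabla Q_tf}^2$ produces the equation. Finally, \ref{it:hopflax_lip} follows by controlling, for $y=\varphi_t(x)$, the vector $\nabla Q_tf(y)-\nabla f(y)=\gamma'(t)-\nabla f(y)$: since $\gamma'(t)$ is the parallel transport of $\nabla f(x)$ along $\gamma$ and $d(x,y)\leq t\norm{\nabla f}_\infty$, the $\norm{\nabla^2 f}_\infty$-Lipschitz continuity of $\nabla f$ delivers the asserted product estimate.

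The main obstacle is concentrated in the strict-convexity step of \ref{it:hopflax_explicit}: the comparison between the Hessians of $\tfrac{1}{2}d^2(\cdot,y)$ and of $f$ is the only place where Riemannian geometry enters non-trivially (curvature corrections in the Hessian expansion of the squared distance, injectivity-radius control, and the piecewise-geodesic handling of the boundary when $M=\cc01^2$), and the smallness threshold $c(M)$ in the statement is dictated precisely by this comparison. Everything else is either formal differentiation or a direct consequence of \cref{lem:exp_is_diffeo}.
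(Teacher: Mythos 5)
Your proposal is correct and follows essentially the same route as the paper: item \ref{it:varphi_t_diffeo} from \cref{lem:exp_is_diffeo}, item \ref{it:hopflax_explicit} by identifying $\varphi_t^{-1}(y)$ as the unique critical point of the uniformly convex minimand $\tfrac{1}{2t}d^2(\cdot,y)+f$ on a small ball, item \ref{it:hopflax_convexity} from that strict convexity, and the remaining items by the method of characteristics. The only noticeable variant is in the order of \ref{it:hamilton_jacobi}--\ref{it:gradient_conservation}: the paper imports the a.e.\ Hamilton--Jacobi equation from \cite{Lott-Villani07} and derives the gradient identity from it, while you obtain the gradient identity first via the envelope theorem applied to \ref{it:hopflax_explicit} (a.e., then upgraded by Lipschitz continuity of $X_t\circ\varphi_t^{-1}$) and deduce Hamilton--Jacobi afterwards; both routes are valid and roughly equally involved.
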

\begin{proof}
  Thanks to the following homogeneity, for any $t>0$ and $\lambda>0$, of the Hopf-Lax semigroup
  \begin{equation*}
    Q_t(\lambda f)(y) = \lambda Q_{\lambda t} f(y) \comma
  \end{equation*}
  we can assume without loss of generality that $\norm{\nabla f}_\infty + \norm{\nabla^2 f}_\infty \le c$ 
  and prove that the statements hold up to time $1$.
  Thus, we will implicitly assume that the time variable satisfies $0\le t\le 1$.
  We will choose the value of the constant $c$ during the proof, it should be clear that all constraints 
  we impose depend only on the manifold $M$ and not on the function $f$.

  The statement of \cref{it:varphi_t_diffeo} follows from \cref{lem:exp_is_diffeo}.
  
  To prove \cref{it:hopflax_explicit} we need some preliminary observations. 
  If $c=c(M)$ is sufficiently small (so that the constraint on $f$ is sufficiently strong), thanks to 
  the compactness of $M$ we can find a radius $r=r(M)>0$ such that:
  \begin{enumerate}[label=(\alph*)]
   \item If $p,\,q\in M$ satisfy $d(p,q)\le r$ then
   \begin{equation*}
      \nabla^2 d^2(\emptyparam,p)(q) \ge \frac12 \metric \fullstop
   \end{equation*}
   \item For any $y\in M$, to compute $Q_tf(y)$ it is sufficient to minimize on $B(y,r)$:
   \begin{equation*}
      Q_tf(y) = \inf_{x\in B(y, r)} \frac1{2t}d^2(x, y)+f(x) \fullstop
   \end{equation*}
   \item For any $y\in M$ it holds the inequality $d(y, \varphi_t^{-1}(y)) \le r$. 
   In particular we can assume that $\varphi_t^{-1}(y)$ is not in the cut-locus of $y$.
   \item For any $y\in M$ it holds the identity
   \begin{equation*}
      \nabla \left(\frac1{2t}d^2(\emptyparam, y) + f(\emptyparam)\right)(\varphi_t^{-1}(y)) = 0 \fullstop
   \end{equation*}
   This identity can be shown computing the gradient of the distance from $y$ squared, since we know
   that $y=\exp(t\nabla f(x))$ where $x=\varphi_t^{-1}(y)$. Indeed, given that $x$ does not belong to the 
   cut-locus of $y$, we know
   \begin{equation*}
      \nabla \left(\frac12 d^2(\emptyparam, y)\right)(x) = -t\nabla f(x)
   \end{equation*}
   and the desired identity follows.
  \end{enumerate}
  With these observations at our disposal, the proof of \cref{it:hopflax_explicit} is straight-forward.
  Given a time $0\le t\le 1$ and a point $y\in M$, let us consider the function $w_{t,y}:M\to\R$ defined
  as
  \begin{equation*}
    w_{t,y}(x) = \frac1{2t}d^2(x,y) + f(x) \fullstop
  \end{equation*}
  We know that $Q_tf(y) = \min_{x\in B(y, r)} w_{t,y}(x)$. Moreover $\nabla w_{t,y}(\varphi_t^{-1}(y))=0$ and,
  if the constraint on $\norm{\nabla^2 f}_\infty$ is sufficiently small, we also know 
  $\nabla^2 w_{t,y}\ge \frac1{3t}\metric$ in $B(y, r)$. 
  Hence, by convexity, we deduce that $\varphi_t^{-1}(y)$ is the global minimum point of 
  $w_{t,y}$ and \cref{it:hopflax_explicit} follows.
  
  Let us now move to the proof of \cref{it:hopflax_convexity}. 
  Let $x,\, x'\in M$ be such that $\varphi_t(x) = y$ and $\varphi_t(x')=y'$. 
  Applying \cref{it:hopflax_explicit} and recalling that $\varphi_t$ is a bi-Lipschitz 
  diffeomorphism, we can see that the inequality we want to prove is equivalent to
  \begin{equation*}
    \frac1t d^2(x, x') \lesssim f(x)-f(x') + \frac 1{2t}\left(d^2(x, y')-d^2(x', y') \right)
  \end{equation*}
  and, using the same notation as above, this becomes
  \begin{equation*}
    \frac1t d^2(x, x') \lesssim w_{t, y'}(x) - w_{t, y'}(x') \fullstop
  \end{equation*}
  The latter inequality follows from the strict convexity of $w_{t,y'}$ that we have already shown 
  while proving \cref{it:hopflax_explicit}.

  Showing from scratch that $Q_tf$ solves the Hamilton-Jacobi equation would not be hard, but for this 
  we refer to \cite[Theorem 2.5, viii]{Lott-Villani07}, where the authors show that $Q_tf$ is 
  a \emph{suitably weak} solution of the Hamilton-Jacobi equation. 
  From their statement, we can deduce that if $Q_tf$ is differentiable at $x\in M$, then
  \begin{equation}\label{eq:hamilton_jacobi_ae}
    \frac{\de}{\de t}Q_tf(x) + \abs{\nabla Q_tf(x)}^2 = 0 \fullstop
  \end{equation}
  Since we will show that $Q_tf$ is $C^{1,1}(M)$, the validity of 
  \cref{it:hopflax_regularity,it:hamilton_jacobi} is a consequence of \cref{eq:hamilton_jacobi_ae}.
  
  The first part of \cref{it:gradient_conservation}, namely 
  $Q_tf(\gamma(t)) = f(x) + \frac t2\abs{\nabla f}^2(x)$, is implied by \cref{it:hopflax_explicit}. 
  To obtain the identity involving the gradient, let us differentiate the previous equality with respect to the
  time variable. If $Q_t f$ is differentiable at $\gamma(t)$, it holds
  \begin{equation}\label{eq:almost_gradient_conservation}
    \frac{\de}{\de t} (Q_t f)(\gamma(t)) + \scalprod{\nabla Q_t f(\gamma(t))}{\gamma'(t)} = 
    \frac{\de}{\de t} (Q_tf(\gamma(t))) = \frac12\abs{\nabla f}^2(x)\fullstop
  \end{equation}
  Applying \cref{it:hamilton_jacobi} and the fact that $\abs{\gamma'(t)} = \abs{\nabla f}(x)$, from
  \cref{eq:almost_gradient_conservation} we can deduce
  \begin{equation}\label{eq:hopflax_tmp}
    -\frac12 \abs{\nabla Q_tf}^2(\gamma(t)) + \scalprod{\nabla Q_t f(\gamma(t))}{\gamma'(t)} 
    = \frac12\abs{\gamma'}^2(x) \iff \abs{\nabla Q_tf(\gamma(t)) - \gamma'(t)}^2 = 0 \fullstop
  \end{equation}
  This does not imply directly \cref{it:gradient_conservation} since we have shown the identity only if
  $Q_t f$ is differentiable at $\gamma(t)$.
  As a byproduct of \cref{it:hopflax_explicit}, we know that $Q_t f$ is Lipschitz continuous and 
  therefore, from \cref{eq:hopflax_tmp}, we can deduce that, fixed $t$, for almost every $x\in M$ it holds
  \begin{equation*}
    \nabla Q_t f(\varphi_t(x)) = \frac{\partial \varphi_s(x)}{\partial s}\Big|_{s=t} \fullstop
  \end{equation*}
  Since the right-hand side is Lipschitz continuous (see \cref{lem:exp_is_diffeo}) it follows that  
  $Q_tf\in C^{1,1}(M)$ and, as anticipated, this concludes the proofs of 
  \cref{it:hopflax_regularity},\cref{it:hamilton_jacobi} and \cref{it:gradient_conservation}.
  
  Finally let us tackle \cref{it:hopflax_lip}.
  Given $y\in M$, let $x=\varphi_t^{-1}(y)$. Thanks to \cref{it:gradient_conservation}, if we consider 
  the geodesic $\gamma:\cc01\to M$ such that $\gamma(0)=x$ and $\gamma'(0)=\nabla f(x)$, we know that 
  $\gamma(t)=y$ and $\gamma'(t)=\nabla Q_tf(y)$.
  
  Thus we have
  \begin{equation*}
    \abs{\nabla f(y) -\nabla Q_tf(y)} 
    \le \int_0^t \abs{\nabla_{\gamma'}\left(\nabla f(\gamma)-\gamma'\right)}\de s
    \le t\abs{\nabla f(x)}\cdot\norm{\nabla^2 f}_\infty
  \end{equation*}
  and this is the desired statement.
\end{proof}

\begin{remark}
  Let us emphasize that the only statement contained in \cref{thm:hopflax_properties} that we are going to
  use is \cref{it:hopflax_convexity}. Indeed it will be crucial when studying the stability of optimal
  maps. Furthermore, such a statement should be seen more like as a property of the $c$-conjugate 
  (see \cite[Section 1.2]{Santambrogio15}) than as a property of the Hopf-Lax semigroup.
  
  We have proven all other statements in order to give a complete reference on the short-time behavior of
  the Hopf-Lax semigroup when the initial datum is in $C^{1,1}(M)$.
\end{remark}

\section{Quantitative Stability of the Optimal Map}\label{sec:stability}
In this section we will always refer to the optimal transport with respect to the quadratic
cost between two probability measures in $\prob(M)$ that are absolutely continuous with respect to the volume
measure $\m$ of a compact Riemannian manifold $(M, \metric)$.

The duality theory of optimal transport can be seen as a tool to bound from above and from below 
the optimal transport cost. Indeed, simply producing a transport map we can bound the cost from above, whereas
with a pair of potentials we can bound it from below. Estimating the optimal cost is the best one can 
desire for a generic convex problem, but for the optimal transport problem we know that the optimal 
map is unique (see \cite{McCann01}) and thence we would like to be able to approximate it.

In details, we want to investigate the following problem.
\begin{problem}
  Let $\nu, \,\mu_1,\, \mu_2\in \prob(M)$ be probability measures with $\nu\ll \m$. Let $S,\, T$ be 
  the optimal transport maps from $\nu$ to $\mu_1$ and $\mu_2$ respectively. 
  Estimate the $L^2(\nu)$-distance $\norm{d(S, T)}^2_{L^2(\nu)}$ between the two maps.
\end{problem}
The approach we are going to adopt builds upon the method, suggested to N.Gigli by the first author, who
used it in \cite[Proposition 3.3 and Corollary 3.4]{gigli2011}. 
In the proof of the mentioned results, the author obtains (even if not stated in this way) exactly
the same inequality we are going to obtain. The substantial difference is that those results (and their proofs)
work only when the ambient is the Euclidean space. 

Transporting the proofs from the flat to the curved setting is not straight-forward. The proof of 
Proposition~3.3 of the mentioned paper does not work on a Riemannian manifold, 
because curvature comes into play when comparing tangent vectors at different points.
To overcome this difficulty we have
come up with \cref{it:hopflax_convexity} of \cref{thm:hopflax_properties}. On the contrary, the proof of 
Corollary 3.4 is easily adapted on a compact Riemannian manifold.

Let us also mention the recent result \cite[Theorem 4.1]{berman2018}. In the said theorem the author
obtain a quantitative stability of the optimal map when, instead of changing the target measure as we are
doing, the source measure is changed. The proof is totally different from ours and is mainly based on
complex analytic tools. Also in that paper only the Euclidean setting (and the flat torus) is considered.

We will attack the stability problem only in the \emph{perturbative setting}, namely when the optimal
map from $\nu$ to $\mu_1$ is the identity up to the first order.
Working only in the perturbative setting might look like an extremely strong assumption that would yield
no applications at all. This is not the case, indeed what we call \emph{perturbative setting} is 
more or less equivalent to requiring only that the optimal transport map $T$ is local (meaning that 
$T-\id$ is uniformly small) and well-behaved.
For example, and this is the whole point of \cite{ambrosio-glaudo2018}, the optimal map from the reference 
measure to a random point cloud is (with high probability) a perturbation of the identity.

We don't need any hypothesis on the optimal map between $\nu$ and $\mu_2$.

\begin{theorem}\label{thm:optimal_map_stability}
  Let $(M,\metric)$ be a closed compact Riemannian manifold (or the square $\cc01^2$) and let us 
  denote by $\m$ its volume measure.

  Let $\nu, \mu_1, \mu_2\in\prob(M)$ be three probability measures with $\nu\ll\m$ and let $S, T:M\to M$ 
  be the optimal transport maps respectively for the pairs of measures $(\nu, \mu_1)$ and $(\nu, \mu_2)$. 
  We assume that $S=\exp(\nabla f)$ where $f:M\to\R$ is a 
  $C^{1,1}$-function\footnote{If $M=\cc01^2$ we ask also that $f$ satisfies the null Neumann boundary
  conditions.} such that $\norm{\nabla f}_\infty + \norm{\nabla^2 f}_\infty \le c$ where $c=c(M)$ is 
  the constant considered in the statement of \cref{thm:hopflax_properties}.
  
  Then it holds
  \begin{equation*}
    \int_M d^2(S, T)\de\nu \lesssim W_2^2(\mu_1, \mu_2) + W_2(\mu_1, \mu_2)W_2(\nu, \mu_1) \fullstop
  \end{equation*}
\end{theorem}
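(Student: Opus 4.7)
The plan is to apply the strict-convexity estimate \cref{it:hopflax_convexity} of \cref{thm:hopflax_properties} at the critical time $t=1$; the hypothesis $\norm{\nabla f}_\infty + \norm{\nabla^2 f}_\infty \le c$ is exactly what makes this time admissible and, in the notation of the theorem, $\varphi_1 = \exp(\nabla f) = S$. Choosing $y=S(x)$ (so $\varphi_1^{-1}(y) = x$) and $y' = T(x)$, integrating against $\nu$, and using that $S$ and $T$ are the optimal maps from $\nu$ to $\mu_1$ and $\mu_2$, one obtains
\begin{equation*}
  \int_M d^2(S,T)\de\nu \lesssim A + B \comma
\end{equation*}
where $A \defeq \int_M Q_1 f\de(\mu_1 - \mu_2)$ and $B \defeq \frac12[W_2^2(\nu,\mu_2) - W_2^2(\nu,\mu_1)]$. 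The term $B$ is dispatched via the triangle inequality for $W_2$: combining $|W_2(\nu,\mu_2)-W_2(\nu,\mu_1)|\le W_2(\mu_1,\mu_2)$ with $W_2(\nu,\mu_2)+W_2(\nu,\mu_1)\le 2 W_2(\nu,\mu_1)+W_2(\mu_1,\mu_2)$ yields $|B|\lesssim W_2^2(\mu_1,\mu_2)+W_2(\mu_1,\mu_2)W_2(\nu,\mu_1)$.

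The heart of the argument is the control of $A$. The plan is to lift it to the optimal coupling $\pi\in\Gamma(\mu_1,\mu_2)$,
\begin{equation*}
  A = \int_{M\times M} \bigl(Q_1f(y_1)-Q_1f(y_2)\bigr) \de\pi(y_1,y_2) \comma
\end{equation*}
and then, for each pair $(y_1,y_2)$, apply a first-order Taylor expansion of $Q_1f$ \emph{centred at $y_1$}:
\begin{equation*}
  Q_1f(y_1)-Q_1f(y_2) = -\scalprod{\nabla Q_1f(y_1)}{\exp^{-1}_{y_1}(y_2)} + O\bigl(d^2(y_1,y_2)\bigr) \comma
\end{equation*}
whose remainder is controlled by an $M$-dependent constant thanks to $\norm{\nabla^2 Q_1 f}_\infty\lesssim 1$ from \cref{it:hopflax_regularity} (pairs outside the injectivity radius are absorbed into the $O(d^2)$ term using the global Lipschitz bound of $Q_1 f$, since there $d\lesssim d^2$). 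Integrating against $\pi$, applying Cauchy--Schwarz in $L^2(\pi)$, and using $|\exp^{-1}_{y_1}(y_2)|=d(y_1,y_2)$, we obtain
\begin{equation*}
  |A| \lesssim \norm{\nabla Q_1f}_{L^2(\mu_1)} W_2(\mu_1,\mu_2) + W_2^2(\mu_1,\mu_2)\fullstop
\end{equation*}

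The last ingredient is \cref{it:gradient_conservation}: since $\nabla Q_1f(S(x)) = \gamma'(1)$ for the geodesic $\gamma$ issuing from $x$ with initial velocity $\nabla f(x)$, one has $|\nabla Q_1f\circ S| = |\nabla f|$ pointwise, whence
\begin{equation*}
  \norm{\nabla Q_1f}_{L^2(\mu_1)}^2 = \int_M|\nabla f|^2\de\nu = \int_M d^2(x,S(x))\de\nu = W_2^2(\nu,\mu_1)\fullstop
\end{equation*}
Combining the three estimates gives the theorem. The key subtlety is to centre the Taylor expansion at $y_1$ rather than $y_2$: the symmetric choice would produce $\norm{\nabla Q_1f}_{L^2(\mu_2)}$ in the Cauchy--Schwarz step, which is not directly comparable to $W_2(\nu,\mu_1)$ and whose naive estimation introduces a spurious $W_2^2(\nu,\mu_1)$ term not present in the claimed bound. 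The remaining technical point is the global validity of the Taylor remainder with an $M$-dependent constant, including at the cut locus where $\exp^{-1}_{y_1}(y_2)$ is ambiguous (this ambiguity does not affect the Cauchy--Schwarz estimate); the boundary case $M=\cc01^2$ adds no essential difficulty since \cref{thm:hopflax_properties} already treats it.
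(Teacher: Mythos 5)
Your proof is correct, but it follows a genuinely different route than the paper's. The paper applies \cref{it:hopflax_convexity} with $y' = S'(x)$ for an auxiliary map $S' = R\circ T$, where $R$ is the optimal map from $\mu_2$ to $\mu_1$: since both $S$ and $R\circ T$ push $\nu$ to $\mu_1$, the terms $\int Q_1 f\circ S\,\de\nu - \int Q_1 f\circ S'\,\de\nu$ cancel \emph{exactly}, and the whole argument reduces to triangle inequalities for $W_2$. The price is that $R$ need not exist, so the paper runs an approximation argument ($\mu_2^k\rightharpoonup\mu_2$ absolutely continuous and strong $L^2$-stability of optimal maps). You instead apply the estimate directly with $y'=T(x)$, which produces the non-cancelling term $A = \int_M Q_1 f\,\de(\mu_1 - \mu_2)$, and you control it by a first-order Taylor expansion of $Q_1 f$ along the optimal coupling of $(\mu_1,\mu_2)$. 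This requires more of the machinery of \cref{thm:hopflax_properties} — the $C^{1,1}$ bound from \cref{it:hopflax_regularity} to handle the remainder, and \cref{it:gradient_conservation} to identify $\norm{\nabla Q_1 f}_{L^2(\mu_1)}$ with $W_2(\nu,\mu_1)$ — but in return you avoid introducing $R$ and the approximation step entirely.

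Your handling of the two delicate points is sound: the Taylor remainder is indeed $O(d^2)$ uniformly, since within the injectivity radius it is controlled by $\norm{\nabla^2 Q_1 f}_\infty\lesssim\norm{\nabla^2 f}_\infty\le c(M)$, and beyond it both the difference $Q_1f(y_1)-Q_1f(y_2)$ and the linear term are individually $O(d)\lesssim O(d^2)$ by the Lipschitz bound $\norm{\nabla Q_1 f}_\infty=\norm{\nabla f}_\infty\le c(M)$ (the identity $\norm{\nabla Q_1 f}_\infty=\norm{\nabla f}_\infty$ also following from \cref{it:gradient_conservation} and constancy of geodesic speed); and the change of variables $\int|\nabla Q_1 f|^2\de\mu_1=\int|\nabla Q_1 f\circ S|^2\de\nu=\int|\nabla f|^2\de\nu=W_2^2(\nu,\mu_1)$ is exactly right, using that $S$ is optimal (which the paper justifies via \cite{Glaudo19}). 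The remark about centring the Taylor expansion at $y_1$ rather than $y_2$ is a genuine and necessary observation.
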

\begin{proof}
  Let us consider a generic transport map $S':M\to M$ from $\nu$ to $\mu_1$ and recall that,
  according to \cite{Glaudo19}, if $c(M)$ is small enough, then the map $S$ is optimal.

  Given $x\in M$, let us apply \cref{it:hopflax_convexity} of \cref{thm:hopflax_properties} with 
  $y=S(x)$ and $y'=S'(x)$ and $t=1$
  \begin{equation*}
     d^2(S(x),S'(x)) 
      \lesssim Q_1f(S(x))-Q_1f(S'(x)) + \frac12\left(d^2(x, S'(x))-d^2(x, S(x))\right) \fullstop
  \end{equation*}
  Integrating this inequality with respect to $\nu$ we obtain
  \begin{equation}\label{eq:stability_same_measures}
    \int_M d^2(S, S')\de\nu \lesssim \norm{d(S', \id)}_{L^2(\nu)}^2 - W_2^2(\nu, \mu_1)
  \end{equation}
  as the first two terms cancel thanks to the fact that both $S$ and $S'$ sends $\nu$ into $\mu_1$.
  
  We can now prove the main statement under the additional assumption that there exists an optimal map 
  $R:M\to M$ from $\mu_2$ to $\mu_1$. Applying \cref{eq:stability_same_measures} with $S' = R\circ T$ we get
  \begin{equation}\label{eq:temporary_ineq}
    \int_M d^2(S, R\circ T)\de\nu \lesssim \norm{d(R\circ T, \id)}_{L^2(\nu)}^2 - W_2^2(\nu, \mu_1) \fullstop
  \end{equation}
  Thanks to the triangle inequality, it holds
  \begin{align*}
    \norm{d(R\circ T, \id)}_{L^2(\nu)} 
    &\le \norm{d(R\circ T, T)}_{L^2(\nu)} + \norm{d(T, \id)}_{L^2(\nu)}
    = \norm{d(R, \id)}_{L^2(\mu_2)} + W_2(\nu, \mu_2) \\
    &\le 2 W_2(\mu_1, \mu_2) + W_2(\nu, \mu_1) \fullstop
  \end{align*}
  Applying this last inequality into \cref{eq:temporary_ineq} yields
  \begin{align*}
    \int_M d^2(S, R\circ T)\de\nu 
    &\lesssim \left[2 W_2(\mu_1, \mu_2) + W_2(\nu, \mu_1)\right]^2 - W_2^2(\nu, \mu_1) \\
    &\lesssim W_2^2(\mu_1, \mu_2) + W_2(\mu_1, \mu_2)W_2(\nu, \mu_1)
  \end{align*}
  and the desired statement follows from the triangle inequality
  \begin{align*}
    \int_M d^2(S, T) &\lesssim \int_M d^2(S, R\circ T)\de\nu + \int_M d^2(R\circ T, T)\de\nu \\
      &\lesssim W_2^2(\mu_1, \mu_2) + W_2(\mu_1, \mu_2)W_2(\nu, \mu_1) + \int_M d^2(R, \id)\de\mu_2 \\
      &= 2 W_2^2(\mu_1, \mu_2) + W_2(\mu_1, \mu_2)W_2(\nu, \mu_1) \fullstop
  \end{align*}

  It remains to drop the assumption on the existence of the optimal map $R$. Given that our ambient manifold 
  is compact, we can apply the nonquantitative strong stability (see \cite[Corollary 5.23]{Villani08}). 
  Let us take a sequence of absolutely continuous probability measures $\mu_2^n$ that weakly converges 
  to $\mu_2$. 
  Thanks to McCann's Theorem (see \cite{McCann01}) the optimal map $R^n$ from $\mu_2^n$ to $\mu_1$ exists 
  and thanks to the strong stability we know that the optimal maps $T^n$ from $\nu$ to $\mu_1^n$ converge 
  strongly in $L^2(\nu)$ to $T$. 
  Hence it is readily seen that the result for $\mu_2$ can be obtained by passing to the limit the 
  result for $\mu_2^n$. 
\end{proof}

\begin{remark}
  The first part of the proof of \cref{thm:optimal_map_stability} might seem a bit magical. Let us describe
  what is happening under the hood. 
  
  The function $f$ is the Kantorovich potential of the couple $(\nu, \mu_1)$ and
  hence, by standard theory in optimal transport, it must be $c$-concave. 
  
  Our hypotheses ensure us that it is not only $c$-concave, but even \emph{strictly} $c$-concave. 
  Furthermore, the theory we have developed on the Hopf-Lax semigroup tells us that even the other 
  potential $f^c=Q_1 f$ is strictly $c$-concave (this is exactly \cref{it:hopflax_convexity}).
  
  The result follows integrating the strict $c$-concavity inequality with respect to the measure
  $\nu$.
\end{remark}

\begin{remark}
  The main use of \cref{thm:optimal_map_stability} is the following one. 
  Assume that the optimal map from $\nu$ to $\mu_1$ is local and well-behaved (this ensures the validity 
  of the hypotheses of the theorem) and furthermore that $\mu_2$ is much closer to $\mu_1$ than to $\nu$. 
  In this situation, the theorem tells us
  \begin{equation*}
    \int_M d^2(S, T)\de\nu \ll \int_M d^2(S, \id)\de\nu \comma
  \end{equation*}
  and this conveys exactly the information that $S$ approximates very well $T$. Notice also
  that the improvement from $C^{0,1/2}$ dependence of \cite{gigli2011} to the kind of Lipschitz dependence 
  is due to the fact that we are working in a perturbative regime, close to the reference measure.
\end{remark}

\section{Optimal map in the random matching problem}\label{sec:random_matching}
We want to apply our result on the stability of the optimal map in the perturbative setting to the 
semi-discrete random matching problem. In this section we will work on a compact closed Riemannian manifold 
$(M, \metric)$ of dimension $2$ (or the square $\cc01^2$). 
We will denote with $\m$ the volume measure, with the implicit assumption that it is a probability.

In this setting, the semi-discrete random matching problem can be formulated as follows.
For a fixed $n\in\N$, consider $n$ independent random points $X_1, X_2, \dots, X_n$ $\m$-uniformly distributed
on $M$. Study the optimal transport map $T^n$ (with respect to the quadratic cost) from $\m$ to the empirical 
measure $\mu^n = \frac1n\sum_{i} \delta_{X_i}$.

Since we want to attack the problem applying \cref{thm:optimal_map_stability}, first of all we have to choose 
$\nu,\, \mu_1$ and $\mu_2$.
The choices of $\nu$ and $\mu_2$ are very natural, indeed we set $\nu=\m$ and $\mu_2=\mu^n$. This way the map
$T$ is $T^n$ .

Far less obvious is the choice of $\mu_1$, $S$ and $f$. As one might expect from the statement 
of \cref{thm:main_theorem} and from the ansatz described in the introduction, our choice is $f=f^{n,t}$.
Thus $S=\exp(\nabla f^{n,t})$ (for some appropriate $t=t(n)$). 
Furthermore, keeping the same notation of \cite{ambrosio-glaudo2018}, the measure $\mu_1=S_\#\m$ will 
be denoted by $\hat\mu^{n,t}$.

First of all it is crucial to understand whether we are in position to apply \cref{thm:optimal_map_stability}.
Indeed we need to check if $\nabla^2 f^{n,t}$ and $\nabla f^{n,t}$ are sufficiently small. Moreover we have
to obtain a strong estimate on $W_2^2(\mu_1, \mu_2)$. 
Both this facts are among the main results obtained in \cite{ambrosio-glaudo2018}. 
Hence let us state them in the following proposition.

\begin{proposition}[Summary of results from \texorpdfstring{\cite{ambrosio-glaudo2018}}{[AG18]}]\label{prop:ag18}
  Let $(M, \metric)$ be a closed compact $2$-dimensional Riemannian manifold (or the square $\cc01^2$) whose
  volume measure $\m$ is a probability.
  Given $n\in\N$, let $X_1,\dots, X_n$ be $n$ independent random points $\m$-uniformly distributed on $M$
  and denote $\mu^n=\frac1n\sum_i \delta_{X_i}$ the associated empirical measure.
  
  For a choice of the time $t>0$, let $\mu^{n,t}=P^*_t(\mu^n)$ be the evolution through the heat flow of
  the empirical measure and let $f^{n,t}:M\to\R$ be the unique null-mean solution\footnote{If $M=\cc01^2$
  we ask also that $f$ satisfies the null Neumann boundary conditions.} to the Poisson equation
  $-\lapl f^{n,t} = \mu^{n,t}-1$.
  Finally, let us define the probability measure $\hat\mu^{n,t}$ as the push-forward of $\m$ through the map
  $\exp(\nabla f^{n,t})$.
  
  For any $\xi>0$, let $A^{n,t}_\xi$ be the probabilistic event $\{\norm{\nabla^2 f^{n,t}}_\infty < \xi\}$.
  
  If $t=t(n)=\frac{\log^4(n)}{n}$ and $\xi=\xi(n) = \frac1{\log(n)}$, the following statements\footnote{In 
  \cite{ambrosio-glaudo2018} the time $t(n)$ is chosen as $t(n)=\gamma\frac{\log^3(n)}{n}$, where 
  $\gamma$ is a constant. 
  As we clarify in \cref{rem:time_is_flexible}, the choice of the exponent of the logarithm in the 
  definition of $t(n)$ is not rigid. 
  We choose the exponent $4$ instead of $3$ since it lets us get some estimates in a cleaner form and 
  makes it possible to avoid inserting a constant in the definition of $t(n)$.}
  hold
  \begin{itemize}
    \item We know the asymptotic behavior of the expected matching cost
    \begin{equation}\label{eq:matching_cost_limit}
      \lim_{n\to\infty} \E{W_2^2(\m, \mu^n)}\left(\frac1{4\pi}\frac{\log(n)}n\right)^{-1} = 1 \fullstop
    \end{equation}
    \item The probability of the complement of $A^{n,t}_\xi$ decays faster than any power.
    In formulas, for any $k>0$ there exists a constant $C=C(M, k)$ such that 
    \begin{equation}\label{eq:exceptional_set_rare}
      \P{\left(A^{n,t}_\xi\right)^\complement} 
      \le C(M, k) n^{-k} \fullstop
    \end{equation}
    \item One has the refined contractivity estimate\footnote{This does not follow from the well-known 
    contractivity property for the heat semigroup. Indeed the standard contractivity would yield 
    an estimate of order $t=\gamma\frac{\log^4(n)}{n}\gg \frac{\log(n)}{n}$
    and such magnitude is too 
    large for our purposes.}
    \begin{equation}\label{eq:diffusion_error}
      \E{W_2^2(\mu^n, \mu^{n,t})} \lesssim \frac{\log(\log(n))}n 
      \left(\ll \frac{\log(n)}{n}\right)\fullstop
    \end{equation}
    \item We are able to control the perturbation error with
    \begin{equation}\label{eq:perturbation_error}
      \E{W_2^2(\mu^{n,t}, \hat\mu^{n,t})} \lesssim \frac{1}{n\log(n)} 
      \left(\ll \frac{\log(n)}{n}\right)\fullstop
    \end{equation}
    \item \label{it:exp_is_optimal}
    When $n$ is sufficiently large, in the event $A^{n,t}_\xi$ the map $\exp(\nabla f^{n,t})$ is 
    optimal from $\m$ to $\hat\mu^{n,t}$.
  \end{itemize}
\end{proposition}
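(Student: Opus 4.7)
The proof is essentially a pointer to the corresponding results in \cite{ambrosio-glaudo2018}; the plan is to indicate each reference and explain the small adaptation needed for the slightly different parameter choice $t(n)=\log^4(n)/n$, $\xi(n)=1/\log(n)$.

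Two items need no adaptation. Identity \cref{eq:matching_cost_limit} is the theorem of \cite{Ambrosio-Stra-Trevisan2018}, which concerns $W_2^2(\m,\mu^n)$ directly and is insensitive to the choice of $t$. The final optimality assertion follows from the $C^{1,1}$-perturbation optimality criterion of \cite{Glaudo19}: on $A^{n,t}_\xi$ the field $\nabla f^{n,t}$ is small enough in both $L^\infty$ and $C^{1,1}$ that $\exp(\nabla f^{n,t})$ is automatically the optimal map from $\m$ to its own push-forward.

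For the tail bound \cref{eq:exceptional_set_rare}, I would write $\nabla^2 f^{n,t}=(\nabla^2 G)\ast(\mu^{n,t}-1)$ with $G$ the Green kernel of $-\lapl$, and note that the result is, up to centering, an average of $n$ i.i.d.\ bounded random fields, each pointwise bounded by $\|\nabla^2 P_t G\|_\infty\lesssim t^{-1}$. A Bernstein/Talagrand concentration inequality applied over a sufficiently fine net of $M$ gives a sub-Gaussian deviation bound; with $t=\log^4(n)/n$ the typical size of $\|\nabla^2 f^{n,t}\|_\infty$ is $\ll 1/\log(n)$, so the threshold $\xi=1/\log(n)$ sits comfortably above the mean and yields super-polynomial tails. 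The bound \cref{eq:diffusion_error} rests on the $H^{-1}$-linearisation of $W_2$ together with the spectral identity
\begin{equation*}
  \E{\norm{\mu^n-\mu^{n,t}}_{H^{-1}}^2}=\frac1n\sum_{k\ge 1}\lambda_k^{-1}(1-e^{-\lambda_k t})^2\comma
\end{equation*}
which on a two-dimensional manifold (Weyl's law $\lambda_k\sim k$) gives $\log\log(n)/n$ once the sum is split at $\lambda_k\sim 1/t$; this is the standard Ambrosio--Stra--Trevisan refinement, and the extra logarithm in $t(n)$ is absorbed cleanly. The perturbation estimate \cref{eq:perturbation_error} follows from a second-order expansion of the Jacobian of $\exp(\nabla f^{n,t})$: on $A^{n,t}_\xi$ the change-of-variables formula gives $\hat\mu^{n,t}-\mu^{n,t}=O(|\nabla^2 f^{n,t}|^2)$ pointwise, whence the $W_2^2$ distance is controlled by $\xi^2\cdot\E{\norm{\nabla^2 f^{n,t}}_{L^2}^2}\lesssim 1/(n\log n)$, with the complement of $A^{n,t}_\xi$ absorbed by \cref{eq:exceptional_set_rare}.

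The main obstacle is purely clerical: every estimate in \cite{ambrosio-glaudo2018} was originally written for $t(n)=\gamma\log^3(n)/n$, and one must verify that the substitution does not deteriorate any of the rates. The extra logarithm of slack is in fact precisely what makes the clean form $\xi=1/\log(n)$ possible without absorbing further constants, so no new analytic ingredient is required beyond what is already present in \cite{ambrosio-glaudo2018}.
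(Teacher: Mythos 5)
Your proposal is, in its essence, the same as the paper's: this proposition is deliberately a catalogue of results imported (with a minor change of parameter $t(n)$) from \cite{ambrosio-glaudo2018} and \cite{Glaudo19}, and the paper's proof consists of pointing to the relevant theorem numbers there (Theorem~1.2 for \cref{eq:matching_cost_limit}, Theorem~3.3 for \cref{eq:exceptional_set_rare}, Theorem~5.2 for \cref{eq:diffusion_error}, Equation~6.2 plus Lemma~3.14 for \cref{eq:perturbation_error}, and \cite[Theorem~1.1]{Glaudo19} for the optimality of $\exp(\nabla f^{n,t})$ on the good event). You do the same and annotate each reference with a sketch of the underlying mechanism; those sketches for the tail bound, the $H^{-1}$ spectral computation, and the optimality criterion are in the right spirit.

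One of your sketches contains a genuine error, though. For the perturbation estimate \cref{eq:perturbation_error} you claim that the $W_2^2$ distance is controlled by $\xi^2\cdot\E{\|\nabla^2 f^{n,t}\|_{L^2}^2}$ and that this quantity is $\lesssim 1/(n\log n)$. It is not. Since $-\lapl f^{n,t}=\mu^{n,t}-1$, elliptic regularity gives
$\E{\|\nabla^2 f^{n,t}\|_{L^2}^2}\approx\E{\|\mu^{n,t}-1\|_{L^2}^2}=\tfrac1n\int_M\bigl(p_{2t}(x,x)-1\bigr)\de\m(x)\approx\tfrac1{nt}\approx\tfrac1{\log^4 n}$,
which is of order \emph{one} in $n$, and multiplying by $\xi^2=1/\log^2 n$ gives $1/\log^6 n$, nowhere near $1/(n\log n)$. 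The quantity that actually appears in Equation~6.2 of \cite{ambrosio-glaudo2018}, and that the paper's proof cites, is the Dirichlet energy $\int_M|\nabla f^{n,t}|^2\de\m$ (gradient, not Hessian), whose expectation is $\sim\tfrac{1}{2\pi}\tfrac{\log n}{n}$ by Lemma~3.14 there; only then does $\xi^2\cdot\tfrac{\log n}{n}=\tfrac1{n\log n}$ come out as stated. You should replace $\E{\|\nabla^2 f^{n,t}\|_{L^2}^2}$ by $\E{\int_M|\nabla f^{n,t}|^2\,\de\m}$ in that line, and correspondingly the heuristic ``pointwise density error $O(|\nabla^2 f|^2)$'' must be routed through the transport inequality against $\mu^{n,t}$ (giving a factor $\|\nabla^2 f\|_\infty^2$ in front of the Dirichlet energy), not through an $L^2$ estimate of the Hessian.
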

\begin{proof}
  All of these results are contained in \cite{ambrosio-glaudo2018} and thus we will only
  give a precise reference for them. All references are to propositions contained
  in \cite{ambrosio-glaudo2018}.
  
  The validity of \cref{eq:matching_cost_limit} is contained in Theorem 1.2.
  The fact that the event $A^{n,t}_\xi$ has overwhelming probability follows from Theorem 3.3.
  The refined contractivity estimate \cref{eq:diffusion_error} is Theorem 5.2.
  
  The estimate \cref{eq:perturbation_error} follows from Equation 6.2 and Lemma 3.14. 
  More specifically Equation 6.2 tells us that in the event $A^{n,t}_\xi$ it holds
  \begin{equation*}
    \E{W_2^2(\mu^{n,t}, \hat\mu^{n,t})} \lesssim \xi^2 \int_M\abs{\nabla f^{n,t}}^2\de\m
  \end{equation*}
  and Lemma 3.14 gives us the expected value of the Dirichlet energy of $f^{n,t}$. The behavior in the 
  complementary of $A^{n,t}_\xi$ can be ignored thanks to \cref{eq:exceptional_set_rare}.
  
  It remains to show that in the event $A^{n,t}_\xi$, the map $\exp(\nabla f^{n,t})$ is optimal. 
  This follows directly from \cite[Theorem 1.1]{Glaudo19}.
\end{proof}

\begin{remark}\label{rem:repeat_old_remark}
  Let us repeat the elementary observation made in \cite[Remark 5.3]{ambrosio-glaudo2018}, as it will 
  be useful.
  
  Let $X, Y$ be two random variables such that, in an event $E$, it holds $X\le Y$. Then
  \begin{equation*}
    \E{X} \le \E{Y} + (\norm{X}_\infty + \norm{Y}_\infty)\P{E^\complement} \fullstop
  \end{equation*}
  In particular, if the infinity norm of $X, Y$ is suitably controlled and the probability of $E^\complement$
  is exceedingly small, we can assume $\E{X}\le \E{Y}$ up to a small error.
  
  This observation allows us to restrict our study to the \emph{good} event $A^{n,t}_\xi$. Indeed all 
  quantities involved in our computations have at most polynomial growth, whereas 
  $\P{(A^{n,t}_\xi)^\complement}$ decays faster than any power.
\end{remark}

Once we have these results in our hands, the proof of the main theorem follows rather easily. Indeed
we just have to check that all assumptions of our stability result are satisfied.

\begin{proof}[Proof of \cref{thm:main_theorem}]
  Let us assume to be in the event $A^{n,t}_\xi$ with $\xi = \frac1{\log(n)}$.
  Hence, thanks to \cref{it:exp_is_optimal}, we can apply \cref{thm:optimal_map_stability} to the triple
  of measures $\nu=\m$, $\mu_1=\hat\mu^{n,t}$ and $\mu_2=\mu^n$ (with 
  $S=\exp(\nabla f^{n,t})$ and $T=T^n$). 
  We obtain
  \begin{align*}
    \int_M d^2(\exp(\nabla f^{n,t}), T^n)\de\m &\lesssim 
    W_2^2(\mu^n, \hat\mu^{n,t}) + W_2(\mu^n, \hat\mu^{n,t})W_2(\m, \hat\mu^{n,t}) \\
    &\lesssim 
    W_2^2(\mu^n, \hat\mu^{n,t}) + W_2(\mu^n, \hat\mu^{n,t})W_2(\m, \mu^n) 
    \fullstop
  \end{align*}
  
  Recalling \cref{rem:repeat_old_remark} and \cref{eq:exceptional_set_rare}, if we consider the expected 
  value we can apply the latter inequality as if it were true unconditionally and not only in the 
  event $A^{n,t}_\xi$. Thus, taking the expected value and applying Cauchy-Schwarz's inequality, we get
  \begin{equation*}
    \E{\int_M d^2(\exp(\nabla f^{n,t}), T^n)\de\m} \lesssim 
    \E{W_2^2(\mu^n, \hat\mu^{n,t})} 
    + \sqrt{\E{W^2_2(\mu^n, \hat\mu^{n,t})}\cdot \E{W^2_2(\m, \mu^n)}}
    \fullstop
  \end{equation*}
  The desired statement follows directly applying 
  \cref{eq:matching_cost_limit,eq:diffusion_error,eq:perturbation_error}.
\end{proof}

\begin{remark}\label{rem:time_is_flexible}
  It might seem that our choice of the time $t=\log^4(n)/n$ is a little arbitrary, and indeed it is. Any time
  $t=t(n)$ of order $\log^\alpha(n)/n$, for some $\alpha > 3$, would have worked flawlessly.
\end{remark}

It remains to justify \cref{rem:distance-tangent}. As already said, the desired estimate boils down to
the validity of 
\begin{equation}\label{eq:linf_distance_bounded}
  \P{\|d(T^{n},\id)\|_\infty > \varepsilon} \ll \frac{\log(n)}n
\end{equation}
for any fixed $\eps>0$.
The strategy of the proof is as follows. 
With \cref{lem:linf_l2_bound} (see also \cite[Lemma 4.1]{goldman2017}) we reduce the hard task of 
controlling the $L^\infty$-distance between $T^n$ and $\id$ to the easier task of controlling 
$W_2^2(\m,\mu^n)$. 
This latter estimate is then shown to be a consequence of \cref{eq:exceptional_set_rare}.

\begin{lemma}\label{lem:linf_l2_bound}
  Let $(M,\metric)$ be a $d$-dimensional compact Riemannian manifold (possibly with Lipschitz boundary) and 
  let $\m$ be the volume measure on $M$.
  
  If $T:M\to M$ is the optimal map with respect to the quadratic cost from $\m$ to $T_\#\m$,
  then one has
  \begin{equation*}
    \norm{d(\id, T)}_{L^\infty(M)}
    \lesssim \left(\int_M d^2(\id, T)\de\m\right)^{\frac1{d+2}} \fullstop
  \end{equation*}
\end{lemma}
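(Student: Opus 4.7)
The plan is to leverage $c$-cyclical monotonicity of the optimal map $T$ (with cost $c=d^2/2$) to show that whenever $T$ moves a point $x_0$ by a distance close to $L\defeq\norm{d(\id,T)}_{L^\infty(M)}$, it must also move an entire set of $\m$-measure $\gtrsim L^d$ by a distance $\gtrsim L$. The desired inequality $L^{d+2}\lesssim\int_M d^2(\id,T)\de\m$ then follows by integrating on this set and taking $(d+2)$-th roots.

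More concretely, fix any $L'<L$. Since two-point monotonicity holds for $\m\otimes\m$-a.e.\ pair and the set $\{x:d(x,T(x))\ge L'\}$ has positive $\m$-measure, I can select (by Fubini) a single point $x_0$ with $d(x_0,T(x_0))\ge L'$ at which the inequality
\[
  d^2(x_0,T(x_0))+d^2(y,T(y)) \le d^2(x_0,T(y))+d^2(y,T(x_0))
\]
is valid for $\m$-a.e.\ $y$. I would then consider the ``lens''
\[
  E\defeq B(x_0,L'/2+\delta)\cap B(T(x_0),L'/2+\delta)
\]
centered around the midpoint of a minimizing geodesic from $x_0$ to $T(x_0)$, for a small $\delta>0$ to be chosen. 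For $y\in E$ both $d(x_0,y)$ and $d(y,T(x_0))$ are bounded by $L'/2+\delta$, so plugging the triangle estimate $d(x_0,T(y))\le d(x_0,y)+d(y,T(y))$ into the monotonicity inequality and simplifying gives
\[
  d(y,T(y))\ge\frac{L'^2-2(L'/2+\delta)^2}{2(L'/2+\delta)}\comma
\]
which, choosing e.g.\ $\delta=L'/10$, reduces to $7L'/30$.

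The argument is closed by a volume comparison. In $\R^d$ the intersection of two balls of radius $3L'/5$ whose centers lie at distance $L'$ is a lens made of two spherical caps of height $L'/10$, and has volume of order $(L')^d$; by the standard compactness argument based on uniform bi-Lipschitz charts used throughout the paper, $\m(E)\gtrsim(L')^d$ for $L'$ smaller than a constant depending only on $M$. Integrating yields
\[
  \int_M d^2(\id,T)\de\m\ge\int_E d^2(y,T(y))\de\m(y)\gtrsim(L')^{d+2}\comma
\]
and letting $L'\to L$ produces the claim. The main difficulty is geometric: the lens-volume estimate is clean only in the regime $L'\lesssim 1$ (and requires a small extra care when $M=\cc01^2$, where the lens must be intersected with the square). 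The regime $L\sim\mathrm{diam}(M)$ is not a genuine obstacle, since there one may just apply the previous argument with a fixed threshold $L_0=L_0(M)$ in place of $L'$, absorbing the bounded ratio $L/L_0$ into the implicit constant.
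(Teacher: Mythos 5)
Your proposal follows essentially the same strategy as the paper's proof: use two-point $c$-cyclical monotonicity of the optimal map together with the triangle inequality to show that any $y$ near the midpoint of a long transport segment $[x_0, T(x_0)]$ is itself moved a comparable distance, and then integrate over a neighborhood of the midpoint whose $\m$-measure is $\gtrsim d(x_0,T(x_0))^d$ by Ahlfors-regularity. The paper integrates over a ball $B(p,\varepsilon\alpha)$ centered at the midpoint $p$, with $\alpha=\tfrac12 d(x_0,T(x_0))$; you integrate over the lens $E=B(x_0,3L'/5)\cap B(T(x_0),3L'/5)$. These are interchangeable choices (your lens contains the ball $B(p, L'/10)$ when $d(x_0,T(x_0))=L'$, so no separate cap-volume computation is actually needed), and your algebra checks out.

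One detail needs tightening. You fix the lens radius as $3L'/5$ in terms of $L'$, yet only guarantee $d(x_0,T(x_0))\ge L'$. If $d(x_0,T(x_0))$ exceeds $6L'/5$ the lens is empty, and even for smaller excesses the cap height $3L'/5-\tfrac12 d(x_0,T(x_0))$ degenerates, so the claimed bound $\m(E)\gtrsim (L')^d$ is not valid for every $L'<L$. Note that you cannot in general choose $x_0$ with $d(x_0,T(x_0))$ close to a prescribed $L'$, since $d(\cdot,T(\cdot))$ may skip values. The fix is cheap: either let the lens radius scale with $d(x_0,T(x_0))$ itself, as the paper's ball radius $\varepsilon\alpha$ does, or restrict to $L'$ close to $L$ (say $L'>10L/11$), which forces $d(x_0,T(x_0))\le L<11L'/10$ and keeps the cap height $\gtrsim L'$. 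Since you already pass to the limit $L'\to L$, the second option costs nothing and closes the gap; the argument is correct in substance once this restriction is made explicit.
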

\begin{proof}
  Since the map $T$ is optimal, its graph is essentially contained in $c$-cyclically monotone set 
  (see~\cite[Theorem 1.38]{Santambrogio15}). 
  More precisely, there exists a Borel set $C\subseteq M$ such that $\{(x,T(x)):\ x\in C\}$ is $c$-cyclically 
  monotone and $M\setminus C$ is $\m$-negligible. 
  We will reduce our considerations to points in $C$ in order to exploit the $c$-cyclical monotonicity.

  Let us fix a point $x_0\in C$ and let us define $\alpha \defeq \frac12 d(x_0, T(x_0))$.
  Let us define the point $p\in M$ as the middle point between $x_0$ and $T(x_0)$, that is 
  $d(x_0, p) = d(p, T(x_0)) = \alpha$. 
  Let us consider a point $x\in B(p, \eps\alpha)\cap C$ where $\eps>0$ is a small constant that will 
  be chosen a posteriori. Finally let us define $\beta\defeq d(x, T(x))$. We want to show that 
  $\beta$ cannot be much smaller than $\alpha$.
  \begin{figure}[htb]
    \centering
\begin{tikzpicture}[scale=9]
\usetikzlibrary{backgrounds}
\coordinate (X0) at (0,0);
\coordinate (TX0) at (1,0.07);
\coordinate (X) at (0.55, 0.1);
\coordinate (TX) at (0.75, -0.05);

\draw[dashed] (X0) to [bend left] 
  coordinate[midway](P) coordinate[pos=0.2](FIRSTA) coordinate[pos=0.8](SECONDA) 
  (TX0);
\begin{scope}[on background layer]
  \draw[fill=gray!10] (P) circle (0.15);
\end{scope}
\draw[dashed] (X) to coordinate[pos=0.45](B) (TX);

\node at (X0) {\textbullet};
\node[above] at (X0) {$x_0$};
\node at (TX0) {\textbullet};
\node[above] at (TX0) {$T(x_0)$};
\node at (P) {\textbullet};
\node[above] at (P) {$p$};
\node at (X) {\textbullet};
\node[above] at (X) {$x$};
\node at (TX) {\textbullet};
\node[right] at (TX) {$T(x)$};
\node[above] at (FIRSTA) {$\alpha$};
\node[above] at (SECONDA) {$\alpha$};
\node[below] at (B) {$\beta$};
\node at (0.45, 0.27) {\footnotesize $B(p, \varepsilon\alpha)$};

\end{tikzpicture}
    \caption{The points considered in the proof of of \cref{lem:linf_l2_bound}.}
  \end{figure}
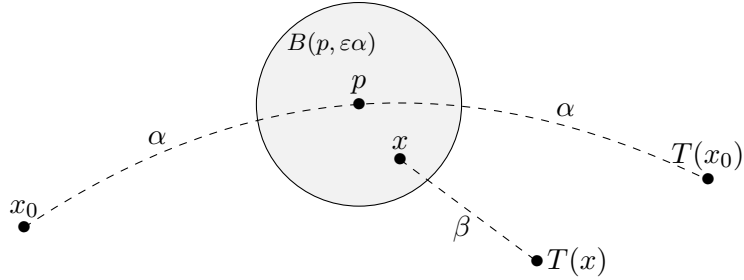

  Thanks to the $c$-cyclical monotonicity of $C$, it holds
  \begin{equation*}
    d^2(x_0, T(x_0)) + d^2(x, T(x)) \le d^2(x, T(x_0)) + d^2(x_0, T(x))
  \end{equation*}
  and thus, applying repeatedly the triangle inequality, we deduce
  \begin{align*}
    4\alpha^2 + \beta^2 
    &\le \left(d(x, p) + d(p, T(x_0))\right)^2 + \left(d(x_0, p) + d(p, x) + d(x, T(x))\right)^2 \\
    &\le (\eps\alpha + \alpha)^2 + (\alpha + \eps\alpha + \beta)^2 
    = 2(1+\eps)^2\alpha^2 + \beta^2 + 2(1+\eps)\alpha\beta \\
    &\hphantom{asdasdasdasdasdasdasd}\Updownarrow \\
    &\hphantom{asdasdasd}(2-(1+\eps)^2) \alpha \le (1+\eps)\beta \fullstop
  \end{align*}
  If $\eps$ is chosen sufficiently small (i.e. $\eps = 1/3$), the desired estimate
  $\alpha\lesssim\beta$ follows.
  
  Since $x$ can be chosen arbitrarily in $B(p, \eps\alpha)\cap C$, the estimate 
  $\alpha\lesssim \beta$ implies
  \begin{align*}
    \int_M d^2(x, T(x))\de\m(x) 
    &\ge \int_{B(p, \eps\alpha)} d^2(x, T(x))\de\m(x)
    \gtrsim \m(B(p, \eps\alpha)) d^2(x_0, T(x_0)) \\
    &\gtrsim \eps\alpha^d d^2(x_0, T(x_0)
    \gtrsim \bigl(d(x_0, T(x_0))\bigr)^{d+2} \comma
  \end{align*}
  where we have used that a ball with radius $r$ not larger than the diameter of 
  $M$ has measure comparable to $r^d$ (follows from the Ahlfors-regularity of compact 
  Riemannian manifolds with Lipschitz boundary).
  This completes the proof since $x_0$ can be chosen arbitrarily in a set with full measure.
\end{proof}
\begin{remark}
  The previous lemma holds, with the same proof, on any Ahlfors-regular metric measure space 
  that is also a length space.
\end{remark}
\begin{remark}
  If we apply \cref{lem:linf_l2_bound} on a $2$-dimensional manifold with $T^n$ being the optimal map
  (with respect to the quadratic cost) from $\m$ to the empirical measure $\mu^n$, we obtain
  \begin{equation*}
    \norm{d(\id,T^n}_{L^{\infty}(M)} 
    \lesssim W_2(\m, \mu^n)^{\frac{1}{2}} \fullstop
  \end{equation*}
  Since we know (as a consequence of \cref{eq:limit_value}) that with high probability
  $W^2_2(\m, \mu^n) \lesssim n^{-1}\log(n)$, we deduce that with high probability
  it holds
  \begin{equation*}
    \norm{d(\id,T^n}_{L^{\infty}(M)}  \lesssim \left(\frac{\log(n)}{n}\right)^{\frac14} \fullstop
  \end{equation*}
  This estimate does not match the asymptotic behavior of the $\infty$-Wasserstein distance between $\m$
  and $\mu^n$. In fact, as proven in \cite{leighton1989,shor1991,trillos2015}, with high probability 
  it holds
  \begin{equation*}
    W_{\infty}(\m, \mu^n) \approx \frac{\log(n)^{\frac34}}{n^{\frac12}} \fullstop
  \end{equation*}
\end{remark}

We are now ready to show \cref{eq:linf_distance_bounded} (to be precise we prove a much stronger 
estimate).
\begin{proposition}\label{prop:linf_is_small}
  Using the same notation and definitions of the statement of \cref{thm:main_theorem}, for any 
  $\eps>0$ and any $k>0$ there exists a constant $C=C(M, \eps, k)$ such that
  \begin{equation}
    \P{\|d(T^{n},\id)\|_\infty > \varepsilon} \le C(M, \eps, k)n^{-k} \fullstop
  \end{equation}
\end{proposition}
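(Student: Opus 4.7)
The plan is to use \cref{lem:linf_l2_bound} to reduce the $L^\infty$ estimate to a tail bound on $W_2^2(\m,\mu^n)$, and then to obtain that tail bound by combining the ansatz of \cref{prop:ag18} with a higher moment refinement of its Dirichlet-energy estimate. Since $T^n$ is optimal from $\m$ to $\mu^n$, \cref{lem:linf_l2_bound} gives in dimension $2$ the deterministic pointwise inequality $\norm{d(\id,T^n)}_{L^\infty(M)}^4\lesssim W_2^2(\m,\mu^n)$, so it suffices to show $\P{W_2^2(\m,\mu^n)>c(M)\varepsilon^4}\le C(M,\varepsilon,k)n^{-k}$ for every $k>0$.

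I would then condition on the good event $A^{n,t}_\xi$ with $t=\log^4(n)/n$ and $\xi=1/\log(n)$. Its complement contributes at most $C(M,k)n^{-k}$ to the probability by \eqref{eq:exceptional_set_rare}. On $A^{n,t}_\xi$, the triangle inequality together with the ingredients of \cref{prop:ag18} yields
\begin{equation*}
  W_2^2(\m,\mu^n)\lesssim W_2^2(\m,\hat\mu^{n,t})+W_2^2(\hat\mu^{n,t},\mu^{n,t})+W_2^2(\mu^{n,t},\mu^n)\lesssim \int_M\abs{\nabla f^{n,t}}^2\de\m+\frac{\log^4(n)}{n},
\end{equation*}
using $W_2^2(\m,\hat\mu^{n,t})=\int\abs{\nabla f^{n,t}}^2\de\m$ from optimality of $\exp(\nabla f^{n,t})$, $W_2^2(\hat\mu^{n,t},\mu^{n,t})\lesssim \xi^2\int\abs{\nabla f^{n,t}}^2\de\m$ from the pointwise estimate behind \eqref{eq:perturbation_error}, and the deterministic heat-flow contractivity $W_2^2(\mu^{n,t},\mu^n)\lesssim t$. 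Since $\log^4(n)/n=o(\varepsilon^4)$ as $n\to\infty$, it remains to prove $\P{\int_M\abs{\nabla f^{n,t}}^2\de\m>c(M,\varepsilon)}\le Cn^{-k}$ for every $k$.

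The last step is to upgrade the expectation bound $\E{\int\abs{\nabla f^{n,t}}^2\de\m}\lesssim\log(n)/n$ of Lemma 3.14 in \cite{ambrosio-glaudo2018} to a higher moment bound. The spectral identity
\begin{equation*}
  \int_M\abs{\nabla f^{n,t}}^2\de\m=\norm{\mu^{n,t}-1}_{H^{-1}}^2=\sum_{j\ge1}\frac{e^{-2\lambda_j t}}{\lambda_j}\abs{\scalprod{\mu^n}{\phi_j}}^2
\end{equation*}
expresses the Dirichlet energy as a positive quadratic form in the iid sample $X_1,\dots,X_n$, for which concentration inequalities of Hanson--Wright or Rosenthal type, together with Weyl's law, should give $\E{(\int\abs{\nabla f^{n,t}}^2\de\m)^p}\lesssim_{M,p}(\log(n)/n)^p$ for every integer $p\ge 1$. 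Markov applied with $p>k$ then delivers the required $n^{-k}$ decay. The hard part will be exactly this moment upgrade: Markov on the first moment alone yields only $\log(n)/n$, far weaker than any polynomial $n^{-k}$, and extracting the correct $(\log(n)/n)^p$ scaling requires careful uniform $L^\infty$ control of the eigenfunctions $\phi_j$ inside the concentration argument.
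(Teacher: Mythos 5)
There is a gap that makes the hard part of your proposal unnecessary, and also leaves your argument incomplete as written. After reducing to a tail bound on $W_2^2(\m,\mu^n)$ via \cref{lem:linf_l2_bound} and conditioning on $A^{n,t}_\xi$, you have identified the Dirichlet energy $\int_M\abs{\nabla f^{n,t}}^2\de\m$ as the quantity to control, and you then propose to upgrade the expectation bound $\lesssim \log(n)/n$ to arbitrarily high moments via Hanson--Wright/Rosenthal-type concentration, which you yourself flag as the hard step. But you have already conditioned on $A^{n,t}_\xi=\{\norm{\nabla^2 f^{n,t}}_\infty<\xi\}$, and on that event the Dirichlet energy is \emph{deterministically} small: since $f^{n,t}$ attains its maximum on the compact $M$, it has a critical point, hence $\norm{\nabla f^{n,t}}_\infty\lesssim\norm{\nabla^2 f^{n,t}}_\infty<\xi$, and therefore $\int_M\abs{\nabla f^{n,t}}^2\de\m\lesssim\xi^2\to0$. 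Combined with $W_2(\mu^{n,t},\mu^n)\lesssim\sqrt t\to0$, this gives $W_2(\m,\mu^n)\lesssim\sqrt t+\xi<\eps$ on $A^{n,t}_\xi$ for $n$ large, and \cref{eq:exceptional_set_rare} finishes. You used the Hessian bound from $A^{n,t}_\xi$ only to control the perturbation term $W_2^2(\hat\mu^{n,t},\mu^{n,t})$, but did not notice that the same bound already trivializes the Dirichlet energy estimate.

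In short: the opening reduction and the conditioning on the good event match the paper, but the route you propose for the final tail bound — establishing $\E{(\int\abs{\nabla f^{n,t}}^2\de\m)^p}\lesssim_{M,p}(\log n/n)^p$ for all $p$ and applying Markov — is a nontrivial, not-carried-out digression that the paper avoids entirely by the elementary observation above. Since the moment-upgrade step is precisely what you identify as the difficulty and is left unproven, the proposal as it stands does not constitute a complete proof, even though a correct and substantially simpler one is one observation away from where you stopped.
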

\begin{proof}
  We show that for any $\eps>0$ and any $k>0$ there exists a constant $C=C(M, \eps, k)$
  such that
  \begin{equation}\label{eq:wasserstein_is_small}
    \P{W_2(\m, \mu^n) > \varepsilon} \le C(M, \eps, k)n^{-k} \fullstop
  \end{equation}
  In fact, if we are able to prove \cref{eq:wasserstein_is_small}, then the statement of the proposition 
  follows applying \cref{lem:linf_l2_bound} with $T=T^n$ (changing adequately $\eps,k$ and the value
  of the constant $C$).
  
  The triangle inequality gives us
  \begin{equation}\label{eq:linf_tmp1}
    W_2(\m, \mu^n) \le W_2(\mu^{n,t}, \mu^n) + W_2(\m, \mu^{n,t}) \fullstop
  \end{equation}
  The first term can be bounded using the contractivity property of the heat semigroup, obtaining
  \begin{equation}\label{eq:linf_tmp2}
    W_2(\mu^{n,t}, \mu^n) \lesssim \sqrt{t} \fullstop
  \end{equation}
  For the second term we employ the transport inequality \cite[(4.1)]{ambrosio-glaudo2018} and get
  \begin{equation}\label{eq:linf_tmp3}
    W^2_2(\m, \mu^{n,t}) \lesssim \int_M \abs{\nabla f^{n,t}}^2\de\m \fullstop
  \end{equation}
  If we assume to be in the event $A^{n,t}_\xi$ (that is defined in the statement of \cref{prop:ag18})
  with $\xi=\xi(n)=\frac1{\log(n)}$, we have
  \begin{equation}\label{eq:linf_tmp4}
    \int_M \abs{\nabla f^{n,t}}^2\de\m \lesssim \norm{\nabla f}_{L^{\infty}(M)}^2
    \lesssim \norm{\nabla^2 f}_{L^{\infty}(M)}^2 \le \xi^2 \fullstop
  \end{equation}
  Joining \cref{eq:linf_tmp1,eq:linf_tmp2,eq:linf_tmp3,eq:linf_tmp4} we deduce that in the event 
  $A^{n,t}_\xi$ it holds
  \begin{equation*}
    W_2(\m, \mu^n) \lesssim \sqrt{t} + \xi \fullstop
  \end{equation*}
  Since $t(n)\to 0$ and $\xi(n)\to 0$ as $n\to\infty$, this implies (for $n$ sufficiently large) that
  in the event $A^{n,t}_\xi$ it holds $W_2(\m, \mu^n) \le \eps$. Hence \cref{eq:wasserstein_is_small}
  is a consequence of \cref{eq:exceptional_set_rare} and this concludes of the proof.
\end{proof}

\printbibliography

\end{document}